\theoremstyle{thmstyleone}%
\newtheorem{theorem}{Theorem}[section]
\newtheorem{lemma}[theorem]{Lemma}
\newtheorem{corollary}[theorem]{Corollary}
\theoremstyle{thmstyletwo}%
\newtheorem{example}[theorem]{Example}%
\theoremstyle{thmstylethree}%
\newcommand{\RR}{\mathbb{R}}
\newcommand{\NN}{\mathbb{N}}
\newcommand{\scp}[2]{\langle{#1},{#2}\rangle}
\newcommand{\vertiii}{{\vert\kern-0.25ex\vert\kern-0.25ex\vert}} 
\renewcommand{\phi}{\varphi}
\newcommand{\bigO}{\mathcal{O}}
\DeclareMathOperator{\prox}{prox}
\DeclareMathOperator{\proj}{proj}
\DeclarePairedDelimiter{\norm}{\lVert}{\rVert}
\DeclarePairedDelimiter{\abs}{\lvert}{\rvert}
\begin{document}

\title[Article Title]{Chambolle-Pock's Primal-Dual Method with Mismatched Adjoint}

\author*[1]{\fnm{Dirk A.} \sur{Lorenz}}\email{d.lorenz@tu-braunschweig.de}
\author*[1]{\fnm{Felix} \sur{Schneppe}}\email{f.schneppe@tu-braunschweig.de}


\affil*[1]{\orgdiv{Institut f\"{u}r Analysis und Algebra}, \orgname{TU Braunschweig}, \orgaddress{\street{Universit\"{a}tsplatz 2}, \city{Braunschweig}, \postcode{38106}, \state{Niedersachsen}, \country{Deutschland}}}


\abstract{
  The primal-dual method of Chambolle and Pock is a widely used algorithm to solve various optimization problems written as convex-concave saddle point problems.
Each update step involves the application of both the forward linear operator and its adjoint. However, in practical applications like computerized tomography, it is often computationally favourable to replace the adjoint operator by a computationally more efficient approximation. This leads to an adjoint mismatch in the algorithm.

In this paper, we analyze the convergence of Chambolle-Pock's primal-dual method under the presence of a mismatched adjoint in the strongly convex setting. We present an upper bound on the error of the primal solution and derive stepsizes and mild conditions under which convergence to a fixed point is still guaranteed. Furthermore we show linear convergence similar to the result of Chambolle-Pock's primal-dual method without the adjoint mismatch. Moreover, we illustrate our results both for an academic and a real-world inspired application.
}

\pacs[MSC Classification]{49M29, 90C25, 65K10}

\maketitle

\label{sect:intro}

\section{Introduction}
Inverse problems occur whenever unknown quantities are measured indirectly and in many cases the measurement process introduces measurement noise. Nevertheless, these inverse problems appear in many practical applications and are often approached by 
solving minimization problems, which can be formulated as the minimization of an expression
\begin{align}\label{eq:minFA+G}
\min_{x \in X} F(Ax) + G(x)
\end{align}
on a Hilbert space $X$ with a linear and bounded operator $A: X \to Y$. Sometimes these kind of problems are hard to solve and it can be beneficial to examine the equivalent dual problem
\begin{align}\label{eq:maxDualFA+G}
\min_{y \in Y} G^*(-A^*y) + F^*(y)
\end{align}
on the Hilbert space $Y$ or the saddle point problem 
\begin{align*}
\min_{x \in X} \max_{y \in Y} G(x) + \langle Ax,y \rangle - F^*(y).
\end{align*}
with the Fenchel conjugates $G^*, F^*$ of $G,F$ instead of the primal problem above.
If both $G: X \to \overline{\RR}$ and $F^*: Y \to \overline{\RR}$ are proper, convex, lower
semicontinuous functionals defined on Hilbert spaces $X,Y$, the primal-dual algorithm of Chambolle and Pock~\cite{chambolle-pock-first-order}, defined as
\begin{align}\label{eq:cp}
  \begin{split}
x^{i+1} &= \operatorname{prox}_{\tau_i G}(x^i - \tau_i A^* y^i), \\
\bar{x}^{i+1} &= x^{i+1} + \omega_i (x^{i+1} - x^i), \\
y^{i+1} &= \operatorname{prox}_{\sigma_{i+1} F^*}(y^i + \sigma_{i+1} A \bar{x}^{i+1}),
\end{split}
\end{align}
for all $i \in \NN$, with positive stepsizes $\tau_i$ and $\sigma_i$ and extrapolation parameter $\omega_i$ has proven to be a simple and effective solution method. Using constant stepsizes, this method converges weakly, if $F^*$ and $G$ are convex and lower semi continuous functionals. Furthermore linear convergence is proven in~\cite{chambolle-pock-first-order}, if both functionals are strongly convex. If only one of the functionals is strongly convex, while the other is convex, an accelerated version of the algorithm with varying stepsizes is proven in~\cite{chambolle-pock-first-order} to converge with rate of $\norm{x^{N}-\hat x}^{2} = \bigO{(N^{-2})}$ where $\hat x$ is a solution of~\eqref{eq:minFA+G}. 

In practical applications, however, it can happen that the operator and its adjoint are given as two seperate implementations of discretizations of a continuous operator and its adjoint. If the implementations use the ``first dualize, then discretize'' approach, it may happen, that the discretizatons are not adjoint to each other. Sometimes, this even happens on purpose, for example to save computational time or to impose certain structure for the image of the adjoint operator  \cite{Buffiere2010InSE, Riddell1995TheAI,zeng2000unmatched}. The influence of such a mismatch has been studied for various algorithms. \cite{Savanier2021ProximalGA,Chouzenout2021pgm-adjoint,zeng2000unmatched,Lorenz2018TheRK, Dong2019FixingNO, Elfving2018UnmatchedPP}

In this paper we examine the convergence of the Chambolle-Pock method in the case of a mismatched adjoint, i.e., we examine the algorithm 
\begin{align}\label{eq:cpmm}
  \begin{split}
x^{i+1} &= \operatorname{prox}_{\tau_i G}(x^i - \tau_i V^* y^i), \\
\bar{x}^{i+1} &= x^{i+1} + \omega_i (x^{i+1} - x^i), \\
y^{i+1} &= \operatorname{prox}_{\sigma_{i+1} F^*}(y^i + \sigma_{i+1} A \bar{x}^{i+1}).
\end{split}
\end{align}
with a linear operator $V: X \to Y$ instead of $A$ for convergence to a fixed point of (\ref{eq:cpmm}) in the case where both $G$ and $F^{*}$ are strongly convex.

\begin{example}[Counterexample for convergence]
  Here is a simple example that shows that the mismatched iteration does not necessarily converge. We consider the problem $\min_{x} \norm{x}_{1}$ on $\RR^{n}$, which is of the form (\ref{eq:minFA+G}), and model this with $A = I$, $F(y) = \norm{y}_1$, and $G\equiv 0$. We consider the most basic form of Chambolle-Pock's method with constant $\tau,\sigma > 0$ and $\omega=1$, i.e. the mismatched iteration is  
\begin{align*}
  x^{i+1} &= x^{i} - \tau V^{T}y^{i}\\
  y^{i+1} & = \proj_{[-1,1]}(y^{i} + \sigma A(2x^{i+1}-x^{i})).
\end{align*}
If we consider the mismatch $V = -\alpha I$ with $\alpha>0$ (instead of $I$), the iteration becomes 
\begin{align*}
  x^{i+1} &= x^{i} + \alpha\tau y^{i}\\
  y^{i+1} & = \proj_{[-1,1]}(y^{i} + \sigma(2x^{i+1}-x^{i})).
\end{align*}
If we initialize with $x^{0} > 0$ and $y^{0} > 0$ (component-wise), we get that the entries in $x^{i}$ are strictly increasing and hence, will not converge to the unique solution $x=0$.

Note that $(x,y) = (0,0)$ is both a saddle point and a fixed point of the mismatched iterations in this case.

\end{example}

Before we analyze the convergence of the mismatched iteration~\eqref{eq:cpmm} we provide a result that shows that fixed points of~\eqref{eq:cpmm} are close to the true solution of~\eqref{eq:cp} if the norm $\norm{A-V}$ is small.

\begin{theorem}\label{thm:error-estimate}
If $G$ is a $\gamma_G$-strongly convex function, $(x^*, y^*)$ is the fixed point of the original Chambolle-Pock method (\ref{eq:cp}) and $(\hat{x},\hat{y})$ is the fixed point of the Chambolle-Pock method with mismatched adjoint (\ref{eq:cpmm}), it holds that
\[
  \norm{x^* - \hat{x}} \leq \frac{1}{\gamma_G} \norm{(V-A)^* \hat{y}}.
\]
\end{theorem}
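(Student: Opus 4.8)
The plan is to convert both fixed-point identities into subdifferential inclusions and then play strong monotonicity of $\partial G$ against monotonicity of $\partial F^*$. Recall that $p = \operatorname{prox}_{\tau G}(z)$ is equivalent to $\tfrac1\tau(z-p)\in\partial G(p)$, and analogously for $\operatorname{prox}_{\sigma F^*}$. Since the extrapolation step is trivial at a fixed point (so $\bar x = x^*$ for the original iteration and $\bar x = \hat x$ for the mismatched one), applying this to~\eqref{eq:cp} and~\eqref{eq:cpmm} gives
\[
 -A^* y^* \in \partial G(x^*), \qquad A x^* \in \partial F^*(y^*),
\]
\[
 -V^* \hat y \in \partial G(\hat x), \qquad A \hat x \in \partial F^*(\hat y).
\]
It is essential here that the $y$-update of the mismatched iteration still uses the true operator $A$; this is exactly what keeps $A$ (not $V$) in the two $\partial F^*$-inclusions.

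Next I would use that $\gamma_G$-strong convexity of $G$ makes $\partial G$ $\gamma_G$-strongly monotone, so testing the two inclusions for $G$ against $x^*-\hat x$ yields
\[
 \langle V^* \hat y - A^* y^*,\, x^* - \hat x \rangle \ \geq\ \gamma_G \norm{x^* - \hat x}^2.
\]
Then I would split $V^* \hat y - A^* y^* = (V-A)^* \hat y + A^*(\hat y - y^*)$, turning this into
\[
 \langle (V-A)^* \hat y,\, x^* - \hat x \rangle + \langle \hat y - y^*,\, A x^* - A \hat x \rangle \ \geq\ \gamma_G \norm{x^* - \hat x}^2.
\]
The cross term is nonpositive: from $A x^* \in \partial F^*(y^*)$ and $A \hat x \in \partial F^*(\hat y)$, monotonicity of $\partial F^*$ (valid since $F^*$ is convex) gives $\langle A x^* - A \hat x,\, y^* - \hat y \rangle \geq 0$, i.e. $\langle \hat y - y^*,\, A x^* - A \hat x \rangle \leq 0$. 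Discarding it and applying Cauchy--Schwarz,
\[
 \gamma_G \norm{x^* - \hat x}^2 \ \leq\ \langle (V-A)^* \hat y,\, x^* - \hat x \rangle \ \leq\ \norm{(V-A)^* \hat y}\;\norm{x^* - \hat x},
\]
and dividing by $\norm{x^* - \hat x}$ (the estimate being trivial when this vanishes) gives the claimed bound.

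The calculation is short; the only points requiring care are the bookkeeping in the decomposition $V^* \hat y - A^* y^* = (V-A)^* \hat y + A^*(\hat y - y^*)$ and the sign of the resulting $F^*$-cross term. The favourable sign is not accidental: it relies on the mismatch being confined to the primal step while the dual step retains $A$, so that the monotone operator $\partial F^*$ is evaluated at the matched pairs $(y^*, Ax^*)$ and $(\hat y, A\hat x)$. I do not expect a genuine obstacle here beyond keeping these two observations straight.
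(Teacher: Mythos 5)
Your proposal is correct and follows essentially the same route as the paper: extract the fixed-point inclusions $-A^*y^*\in\partial G(x^*)$, $-V^*\hat y\in\partial G(\hat x)$, $Ax^*\in\partial F^*(y^*)$, $A\hat x\in\partial F^*(\hat y)$, combine $\gamma_G$-strong monotonicity of $\partial G$ with monotonicity of $\partial F^*$ to isolate $\scp{x^*-\hat x}{(V-A)^*\hat y}\geq\gamma_G\norm{x^*-\hat x}^2$, and finish with Cauchy--Schwarz. Your explicit decomposition of $V^*\hat y - A^*y^*$ is just a rephrasing of the paper's step of summing the two monotonicity inequalities, so there is nothing further to add.
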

\begin{proof}
Since $\partial G$ is $\gamma_{G}$-strongly monotone and $\partial F^*$ is monotone, we can conclude for
$-A^* y^* \in \partial G(x^*), -V^*\hat{y} \in \partial G(\hat{x}),
A x^* \in \partial F^*(y^*), A \hat{x} \in \partial F^*(\hat{y})$ that
\[\scp{x^*-\hat{x}}{-A^* y^* + V^* \hat{y}} \geq  \gamma_G \norm{x^* - \hat{x}}^2\]
and 
\[\scp{x^*-\hat{x}}{A^* (y^* - \hat{y})} = \scp{A(x^*-\hat{x})}{y^* - \hat{y}} \geq 0.\]
These sum up to
\[\scp{x^*-\hat{x}}{(V-A)^* \hat{y}} \geq \gamma_G \norm{x^* - \hat{x}}^2 .\]
Furthermore, it is
\[\gamma_G \norm{x^* - \hat{x}}^2 \leq \scp{x^*-\hat{x}}{(V-A)^* \hat{y}} \leq \norm{x^* - \hat{x}} \norm{(V-A)^* \hat{y}},\]
which shows
\[\gamma_G \|x^* - \hat{x}\| \leq \| (V-A)^* \hat{y} \|.\]
\end{proof}

Notably, we cannot show that the mismatched algorithm will converge to the original solution (a situation which possible for other mismatched iterations~\cite{Lorenz2018TheRK}). However, since we can bound the difference of fixed points up to a multiplicative constant by the norm of the difference between the correct and the mismatched adjoint, the analysis of the Chambolle-Pock method with mismatched adjoint can be of interest in practical applications. Moreover, in applications in computerized tomography, mismatched adjoints are also used on purpose~\cite{Lorenz2018TheRK,Savanier2021ProximalGA,Chouzenout2021pgm-adjoint,zeng2000unmatched}.

The rest of the paper is structured as follows. In Section~\ref{sec:conv} we reformulate (\ref{eq:cpmm}), introduce the concept of test operators from~\cite{valkonen_test} and provide some technical lemmas that we need to prove convergence of the mismatched iteration in Section~\ref{sec:convergence}. In Section~\ref{sect:examples}, we present numerical examples and Section~\ref{sect:conclusion} concludes the paper.

Throughout this paper, we will use $\scp{x}{x'}_T := \scp{Tx}{x'}$ and the seminorm $\norm{x}_T ^{2}:= \scp{x}{x}_T$ for a (not necessarily symmetric) positive semidefinite operator $T$. \footnote{In the case that $T$ is not positive semidefinite, the expression $\norm{x}_{T}$ will not be used, however, its square $\norm{x}_{T}^{2}$ will (but may be a negative number.)} In case of having $T = I$, we will denote the Hilbert space norm as $\norm{x}$ without the subscripts. Additionally, we will use the notation $\mathcal{L}(\mathcal{U},\mathcal{V})$ for the space of bounded linear operators $L: \mathcal{U} \to \mathcal{V}$ between Hilbert spaces $\mathcal{U}$ and $\mathcal{V}$ with the corresponding operator norm $\norm{L} = \inf\{c \geq 0: \norm{Lx} \leq c \norm{x} \text{ for all } x \in \mathcal{U}\}$. Furthermore, we will write $A \geq B$ for operators $A,B \in \mathcal{L}(\mathcal{U},\mathcal{U})$, if $A-B$ is positive semidefinite.

\section{Preliminaries}
\label{sec:conv}
In this section we present the reformulation of the mismatched iteration as a preconditioned proximal point method, recall the results from~\cite{valkonen_test} on which our analysis relies and provide the necessary technical estimates needed for the convergence proof. Note that we do not prove the existence of a fixed point of the mismatched iteration, but take its existence for granted. 

\subsection{Subdifferential Reformulation}
Since the original proof of Chambolle and Pock in \cite{chambolle-pock-first-order} relies on having the exact adjoint, we use a different approach, namely the reformulation of the method as a preconditioned proximal point method from~\cite{he-yuan-pdhg} (see also~\cite{valkonen_acc}). Recall that the proximal operator of a proper, convex and lower semicontinuous functional $f: X \to \overline{\RR}$ is defined as $\prox_{f}(x) = \arg \min_{y \in X} f(y) + \tfrac12\norm{y-x}^{2}$ and that it holds 
\begin{align*}
v = \prox_f(x) \Leftrightarrow x-v \in \partial f(v).
\end{align*}
One sees that each stationary point $\hat u =
\begin{pmatrix}
  \hat x\\\hat y
\end{pmatrix}$ of iteration~(\ref{eq:cpmm})
fulfills $0 \in H\hat{u}$ with
\begin{align}\label{eq:def_H}
H(u) = H(x,y) = \begin{pmatrix} \partial G(x) + V^* y \\ \partial F^*(y) - Ax \end{pmatrix}.
\end{align}
With $\bar{x}^{i+1} = (1+\omega_i) x^{i+1} - \omega_i x^i$, we rewrite the update steps of algorithm~(\ref{eq:cpmm}) as
\begin{equation}\label{eq:reform-cpmm}
0 \in  \left( \begin{array}{c} x^{i+1} + \tau_i \partial G(x^{i+1}) - x^i + \tau_i V^* y^i \\
y^{i+1} + \sigma_{i+1} \partial F^*(y^{i+1})- y^i - \sigma_{i+1} A \bar{x}^{i+1} \end{array} \right).
\end{equation}
We rewrite this inclusion with the help of the operator $\tilde H_{i+1}$ defined by
\begin{align}\label{eq:def-tildeH}
\tilde{H}_{i+1}(u) = \begin{pmatrix} \partial G(x) + V^*y \\ \partial F^*(y) - A [(1+\omega_i) x - \omega_i x^i ] - \omega_i V (x^i -x) \end{pmatrix},
\end{align}
the  \textit{preconditioner}
\begin{align}\label{eq:def-precond}
M_{i+1} = \begin{pmatrix} I & - \tau_i V^* \\ - \omega_i \sigma_{i+1} V & I\end{pmatrix},
\end{align}
and the \textit{step length operator}
\begin{align}\label{eq:def-steplength}
W_{i+1} = \begin{pmatrix} \tau_i I & 0 \\ 0 & \sigma_{i+1} I \end{pmatrix}.
\end{align}
as the following preconditioned proximal point iteration
\begin{align}\label{eq:HM-iteration}
0 \in W_{i+1} \tilde{H}_{i+1}(u^{i+1}) + M_{i+1} (u^{i+1} - u^i)
\end{align}
which is exactly the mismatched iteration~(\ref{eq:reform-cpmm}).

For this formulation of the iteration we can apply results from \cite{valkonen_test} and \cite{valkonen_acc}. We quote this (slightly modified) general theorem of \cite[Theorem 2.1]{valkonen_acc}:

\begin{theorem} \label{valkonen_theorem}
Let $\mathcal{U}$ be a Hilbert space,  $\tilde{H}_{i+1}: \mathcal{U} \rightrightarrows \mathcal{U}$ and let 
    $M_{i+1}, W_{i+1}, Z_{i+1} \in \mathcal{L}(\mathcal{U},\mathcal{U})$ for $i \in \mathbb{N}$. Suppose that
    \begin{equation}
	\label{eqn:firstcond}
	0 \in W_{i+1} \tilde{H}_{i+1}(u^{i+1}) + M_{i+1}(u^{i+1}-u^i)
	\end{equation}
    is solvable for $\{u^{i+1}\}_{i \in \mathbb{N}} \subset \mathcal{U}$ and let $\hat{u} \in \mathcal{U}$ be a stationary point of the iteration.
    
    If $Z_{i+1}M_{i+1}$ is self-adjoint and for some $\Delta_{i+1} \in \mathbb{R}$ the condition
    \begin{align}
	\label{eqn:secondcond}
	\scp{\tilde{H}_{i+1}(u^{i+1})}{u^{i+1} - \hat{u}}_{Z_{i+1} W_{i+1}} &\nonumber \\ \geq  \frac12 \norm{u^{i+1} &- \hat{u}}_{Z_{i+2}M_{i+2} - Z_{i+1}M_{i+1}}^2 - \frac12 \norm{u^{i+1}-u^i}^2_{Z_{i+1}M_{i+1}} - \Delta_{i+1}	
	\end{align}
   holds for all $i \in \mathbb{N}$, then so does the descent inequality
   \[
     \frac12 \norm{u^N - \hat{u}}^2_{Z_{N+1}M_{N+1}} \leq \frac12 \|u^0-\hat{u}\|^2_{Z_1M_1} + \sum_{i=0}^{N-1} \Delta_{i+1}.
   \]
\end{theorem}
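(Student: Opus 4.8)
The plan is to use the ``testing'' technique of~\cite{valkonen_test,valkonen_acc}: test the inclusion~\eqref{eqn:firstcond} against $Z_{i+1}(u^{i+1}-\hat u)$, turn the arising cross term into a difference of squared quadratic forms via a polarization identity, insert the hypothesis~\eqref{eqn:secondcond}, and telescope.

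First I would use solvability of~\eqref{eqn:firstcond} to fix, for each $i\in\NN$, an element $w^{i+1}\in\tilde H_{i+1}(u^{i+1})$ with $W_{i+1}w^{i+1} = -M_{i+1}(u^{i+1}-u^i)$; this is the representative with respect to which the left-hand side of~\eqref{eqn:secondcond} is read. Pairing this identity with $u^{i+1}-\hat u$ in the $Z_{i+1}$-inner product and using $\scp{x}{x'}_{Z_{i+1}W_{i+1}}=\scp{W_{i+1}x}{x'}_{Z_{i+1}}$ gives
\[
  \scp{w^{i+1}}{u^{i+1}-\hat u}_{Z_{i+1}W_{i+1}} = -\scp{Z_{i+1}M_{i+1}(u^{i+1}-u^i)}{u^{i+1}-\hat u}.
\]
Since $Z_{i+1}M_{i+1}$ is self-adjoint by hypothesis, I can apply the three-point identity $\scp{P(a-b)}{a-c} = \tfrac12\norm{a-c}_P^2 + \tfrac12\norm{a-b}_P^2 - \tfrac12\norm{b-c}_P^2$, valid for self-adjoint $P$, with $P=Z_{i+1}M_{i+1}$, $a=u^{i+1}$, $b=u^i$, $c=\hat u$, to rewrite the right-hand side as
\[
  \scp{w^{i+1}}{u^{i+1}-\hat u}_{Z_{i+1}W_{i+1}} = \tfrac12\norm{u^i-\hat u}_{Z_{i+1}M_{i+1}}^2 - \tfrac12\norm{u^{i+1}-\hat u}_{Z_{i+1}M_{i+1}}^2 - \tfrac12\norm{u^{i+1}-u^i}_{Z_{i+1}M_{i+1}}^2.
\]
Here the squared expressions are quadratic forms of (possibly indefinite) self-adjoint operators, so no positivity is needed at this stage.

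Next I would substitute this into~\eqref{eqn:secondcond}. The terms $\tfrac12\norm{u^{i+1}-u^i}_{Z_{i+1}M_{i+1}}^2$ cancel on both sides, and using the additivity $\norm{v}_{Z_{i+1}M_{i+1}}^2 + \norm{v}_{Z_{i+2}M_{i+2}-Z_{i+1}M_{i+1}}^2 = \norm{v}_{Z_{i+2}M_{i+2}}^2$ with $v=u^{i+1}-\hat u$, the inequality collapses to the one-step descent estimate
\[
  \tfrac12\norm{u^{i+1}-\hat u}_{Z_{i+2}M_{i+2}}^2 \le \tfrac12\norm{u^i-\hat u}_{Z_{i+1}M_{i+1}}^2 + \Delta_{i+1}, \qquad i\in\NN.
\]
Summing over $i=0,1,\dots,N-1$ makes the intermediate terms telescope and yields exactly the claimed descent inequality.

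There is no deep obstacle here; the work is in the bookkeeping. The main care-points will be keeping the non-symmetric test operators $Z_{i+1}$ in the correct slots; handling the fact that $Z_{i+1}M_{i+1}$ and the increments $Z_{i+2}M_{i+2}-Z_{i+1}M_{i+1}$ need not be positive semidefinite, so everything must be phrased with the squared quantities $\norm{\dummy}_T^2$ interpreted as quadratic forms rather than as genuine (semi)norms; and noting that the self-adjointness hypothesis on $Z_{i+1}M_{i+1}$ is precisely what licenses the polarization identity used in the second step.
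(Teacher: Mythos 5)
Your proof is correct: the representative identity $W_{i+1}w^{i+1}=-M_{i+1}(u^{i+1}-u^i)$, the three-point (polarization) identity for the self-adjoint form $Z_{i+1}M_{i+1}$, the cancellation of the $\norm{u^{i+1}-u^i}^2_{Z_{i+1}M_{i+1}}$ terms, the additivity of the quadratic forms, and the telescoping sum all go through exactly as written, with the correct observation that no positive semidefiniteness is needed since everything is read as (possibly indefinite) quadratic forms. The paper itself does not prove this statement but quotes it from Valkonen's work, and your argument is precisely the standard ``testing'' proof underlying that cited result, so there is nothing to add.
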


The maps $\tilde{H}_{i}$, $M_{i}$, and $W_{i}$ from \eqref{eq:def-tildeH},~\eqref{eq:def-precond}, and~\eqref{eq:def-steplength} are defined on $\mathcal{U} = X \times Y$. Then the iteration~(\ref{eqn:firstcond}) is exactly the iteration~\eqref{eq:cpmm}. The operator $Z_{i}$ and the number $\Delta_{i}$ are yet to be defined and are used to establish inequality~(\ref{eqn:secondcond}) and the final descent inequality. The operator $Z_{i}$ is called \emph{test operator} and the $\Delta_{i}$ can be used to further quantify the descent. 

We will introduce the test operator $Z_{i}$ and the quantities $\Delta_{i}$ in the next subsection and aim for  non-positive $\left( \Delta_{i}\right)_{i \in \NN}$, but also want that the operators $Z_{i+1}M_{i+1}$ grow as fast as possible to obtain fast convergence.

Consequently, our next aim is to show that
\begin{itemize}
\item with the right step length choices an operator $Z_{i+1}$ with $Z_{i+1} M_{i+1}$ being self-adjoint exists, and 
\item for some non-positive $\Delta_{i+1}$ the inequality (\ref{eqn:secondcond}) 
can be obtained.
\end{itemize}

\subsection{Test operator and step length bounds}
We choose the test operator as 
\begin{align}\label{eq:def-test-op}
Z_{i+1} := \begin{pmatrix} \phi_i I & 0 \\ 0 & \psi_{i+1}I \end{pmatrix}
\end{align}
for $i \in \NN$ and show that with this choice we can fulfill the assumptions in Theorem~\ref{valkonen_theorem} for appropriate $\phi_{i}$ and $\psi_{i+1}$.
First, we need that $Z_{i+1}M_{i+1}$ is self adjoint. Since this operator is
\begin{align}\label{eq:ZM}
  Z_{i+1}M_{i+1} =
  \begin{pmatrix}
    \phi_{i}I & -\tau_{i}\phi_{i}V^{*}\\
    -\omega_{i}\sigma_{i+1}\psi_{i+1}V & \psi_{i+1}I
  \end{pmatrix},
\end{align}
we assume that the values $\phi_{i},\psi_{i+1}$ of the test operator fulfill
\begin{align}\label{eq:adjointpd}
  \omega_{i}\sigma_{i+1}\psi_{i+1} = \tau_{i}\phi_{i}.
\end{align}
Next we introduce the ``tested dual stepsize'' $\psi_{i}\sigma_{i}$
\begin{align}\label{eq:def-eta}
  \eta_{i} = \psi_{i}\sigma_{i}
\end{align}
and define the extrapolation constant $\omega_{i}$ as
\begin{align}\label{eq:def-omega}
  \omega_{i} = \tfrac{\eta_{i}}{\eta_{i+1}}.
\end{align}
Consequently, the ``tested primal stepsize'' $\tau_{i}\phi_{i}$ also fulfills
\begin{align}\label{eq:tested-primal-stepsize}
  \eta_{i} = \eta_{i+1}\omega_{i} = \omega_{i}\sigma_{i+1}\psi_{i+1} = \tau_{i}\phi_{i}.
\end{align}
Furthermore, Theorem \ref{valkonen_theorem} needs $Z_{i+1} M_{i+1}$ to be positive semidefinite which we show now.
\begin{lemma}[\cite{valkonen_acc}, Lemma 3.4]\label{lem:lowerbound-ZM}
Let $i \in \NN$ and suppose that conditions~(\ref{eq:adjointpd}),~(\ref{eq:def-eta}) and~(\ref{eq:def-omega}) hold. Then we have that $Z_{i+1} M_{i+1}$ is self-adjoint and satisfies 
\begin{align}\label{eq:estimate-ZM}
Z_{i+1} M_{i+1}  = \begin{pmatrix} \phi_i I & - \eta_i V^* \\ - \eta_i V & \psi_{i+1} I \end{pmatrix} \geq\begin{pmatrix} \delta \phi_i I & 0 \\ 0 & \psi_{i+1} I - \frac{\eta_i^2 }{\phi_i (1-\kappa)} VV^* \end{pmatrix}
\end{align}
for all $\delta \in [0,\kappa]$ with $\kappa \in (0,1)$.
\end{lemma}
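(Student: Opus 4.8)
The plan is to first pin down the exact form of $Z_{i+1}M_{i+1}$ and then reduce the operator inequality to a single scalar Young-type estimate. Starting from the expression~\eqref{eq:ZM}, I would substitute the tested-stepsize identities~\eqref{eq:tested-primal-stepsize}, namely $\tau_i\phi_i = \eta_i$ and $\omega_i\sigma_{i+1}\psi_{i+1} = \eta_i$, so that both off-diagonal blocks become $-\eta_i V^*$ and $-\eta_i V$ respectively. Since these two blocks are mutual adjoints and the diagonal blocks $\phi_i I$, $\psi_{i+1}I$ are self-adjoint, $Z_{i+1}M_{i+1}$ is self-adjoint, which in particular gives the equality displayed in~\eqref{eq:estimate-ZM}.

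For the inequality it then suffices to show that the difference of the two operators in~\eqref{eq:estimate-ZM}, namely
\[
  D_{i+1} = \begin{pmatrix} (1-\delta)\phi_i I & -\eta_i V^* \\ -\eta_i V & \tfrac{\eta_i^2}{\phi_i(1-\kappa)} VV^* \end{pmatrix},
\]
is positive semidefinite. Evaluating its quadratic form at $(x,y) \in X\times Y$ and using $\scp{VV^* y}{y} = \norm{V^* y}^2$ yields
\[
  \scp{D_{i+1}(x,y)}{(x,y)} = (1-\delta)\phi_i\norm{x}^2 - 2\eta_i \scp{V^* y}{x} + \tfrac{\eta_i^2}{\phi_i(1-\kappa)}\norm{V^* y}^2 .
\]

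Since $\delta \le \kappa < 1$ we have $1-\delta \ge 1-\kappa > 0$, and $\phi_i>0$ as a value of the test operator, so I would estimate the cross term by Young's inequality in the form
\[
  2\eta_i\scp{V^* y}{x} \le 2\eta_i\norm{V^* y}\,\norm{x} \le (1-\kappa)\phi_i\norm{x}^2 + \tfrac{\eta_i^2}{(1-\kappa)\phi_i}\norm{V^* y}^2 ,
\]
with the split chosen precisely so that the two $\norm{V^*y}^2$ contributions cancel. What remains is $(\kappa - \delta)\phi_i\norm{x}^2 \ge 0$, which proves $D_{i+1}\ge 0$ and hence~\eqref{eq:estimate-ZM}.

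I do not expect a genuine obstacle here; the only point requiring care is choosing the weight in Young's inequality so that the positive and negative $\norm{V^* y}^2$ terms cancel exactly, leaving a nonnegative multiple of $\norm{x}^2$. Minor bookkeeping points are to keep the seminorm $\norm{V^* y}$ expressed through $VV^*$ acting on the $Y$-component (rather than $V^*V$ on $X$), and to use that $\phi_i>0$ so that the division by $\phi_i$ and the resulting inequality are legitimate.
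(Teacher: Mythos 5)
Your proposal is correct and follows essentially the same route as the paper: both form the difference between $Z_{i+1}M_{i+1}$ and the right-hand side of~\eqref{eq:estimate-ZM} and show that this block operator is positive semidefinite. The only difference is that the paper asserts the semidefiniteness directly from the factored form of the lower-right block $(-\eta_i V)\bigl((1-\kappa)\phi_i\bigr)^{-1}(-\eta_i V)^*$ together with $\delta\leq\kappa<1$, whereas you verify it explicitly through the quadratic form and a weighted Young's inequality, which is a legitimate (and slightly more detailed) way to complete the same step.
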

\begin{proof}
  First note that from~\eqref{eq:ZM},~(\ref{eq:adjointpd}),~\eqref{eq:def-eta},~\eqref{eq:def-omega}, and~\eqref{eq:tested-primal-stepsize} we get
\begin{align*}
Z_{i+1} M_{i+1} 
&=\begin{pmatrix} \phi_i I & - \eta_i V^* \\ - \eta_i V & \psi_{i+1} I \end{pmatrix}.
\end{align*}
For the second claim we observe
\begin{align*}
  M &: = Z_{i+1} M_{i+1} - \begin{pmatrix} \delta \phi_i I & 0 \\ 0 & \psi_{i+1} I - \frac{\eta_i^2}{\phi_i (1-\kappa)} VV^* \end{pmatrix} \\
    &= \begin{pmatrix} (1-\delta) \phi_i I & -\eta_i V^* \\  - \eta_i V &(-\eta_i V ) \left((1-\kappa) \phi_i \right)^{-1} (-\eta_i V)^*  \end{pmatrix}.
\end{align*}
Since $1 > \kappa \geq \delta$ and $(1-\kappa) \phi_i > 0$, we derive the positive semidefiniteness of $M$.
\end{proof}

Hence, we can ensure that $Z_{i+1}M_{i+1}$ is positive semidefinite if we assume
\begin{align}\label{eq:cond-psi}
  \psi_{i+1} &\geq \frac{\eta_i^2}{\phi_i (1-\kappa)} \norm{V}^2.
\end{align}
Note that by $\eta_i = \phi_i \tau_i = \psi_i \sigma_i$ and $\eta_{i+1} = \eta_{i} \omega_{i}^{-1}$ we get that condition~\eqref{eq:cond-psi} enforces
\[\sigma_{i+1} \tau_{i} \leq \frac{(1-\kappa)}{\omega_i} \frac{1}{\norm{V}^2},\]
similar to the widely known stepsize bounds in~\cite{chambolle-pock-first-order}.

Next we investigate the operator $Z_{i+2} M_{i+2} - Z_{i+1} M_{i+1}$  and show that it is easy to evaluate $\norm{u}_{Z_{i+2} M_{i+2} - Z_{i+1} M_{i+1}}^{2}$.

\begin{lemma}[{\cite[Lemma 3.5]{valkonen_acc}}]
Let $i \in \NN$ and assume that conditions~(\ref{eq:adjointpd}),~(\ref{eq:def-eta}),~(\ref{eq:def-omega}) and~(\ref{eq:cond-psi}) are fulfilled and define 
\begin{align}\label{eq:def-Xi}
\Xi_{i+1} := 2 \begin{pmatrix}  \tau_i \mu_G I & \tau_i V^* \\ -\sigma_{i+1} V & \sigma_{i+1}  \mu_{F^*} I  \end{pmatrix}
\end{align}
If the constants $\mu_{G}$ and $\mu_{F^{*}}$ are chosen such that 
\begin{align}
  \phi_{i+1 } &= \phi_i (1 + 2 \tau_i \mu_G), \label{eq:def-phi}\\
  \psi_{i+1} &= \psi_i (1 + 2 \sigma_i \mu_{F^*}),\label{eq:def-psi} 
\end{align}
then it holds that
\begin{align*}
\norm{u}^2_{Z_{i+2}M_{i+2} - Z_{i+1} M_{i+1} } = \norm{u}^2_{Z_{i+1}\Xi_{i+1}}.
\end{align*}
\end{lemma}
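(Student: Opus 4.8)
The plan is to verify the claimed identity by computing both sides explicitly as block operators on $\mathcal{U}=X\times Y$ and comparing. First I would record the self-adjoint forms furnished by (the proof of) Lemma~\ref{lem:lowerbound-ZM}, namely
\[
  Z_{i+1}M_{i+1}=\begin{pmatrix}\phi_i I & -\eta_i V^* \\ -\eta_i V & \psi_{i+1}I\end{pmatrix},
  \qquad
  Z_{i+2}M_{i+2}=\begin{pmatrix}\phi_{i+1}I & -\eta_{i+1}V^* \\ -\eta_{i+1}V & \psi_{i+2}I\end{pmatrix},
\]
where the second is obtained by shifting $i\mapsto i+1$ in the first; this is legitimate because conditions~\eqref{eq:adjointpd},~\eqref{eq:def-eta},~\eqref{eq:def-omega} and~\eqref{eq:cond-psi} are assumed at the indices involved. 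Subtracting and using $-(\eta_{i+1}-\eta_i)=\eta_i-\eta_{i+1}$ gives the (self-adjoint) block operator
\[
  Z_{i+2}M_{i+2}-Z_{i+1}M_{i+1}=\begin{pmatrix}(\phi_{i+1}-\phi_i)I & (\eta_i-\eta_{i+1})V^* \\ (\eta_i-\eta_{i+1})V & (\psi_{i+2}-\psi_{i+1})I\end{pmatrix}.
\]

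Next I would compute $Z_{i+1}\Xi_{i+1}$ directly from~\eqref{eq:def-test-op} and~\eqref{eq:def-Xi}. Using $\phi_i\tau_i=\eta_i$ (from~\eqref{eq:tested-primal-stepsize}) and $\psi_{i+1}\sigma_{i+1}=\eta_{i+1}$ (from~\eqref{eq:def-eta} at index $i+1$), this equals
\[
  Z_{i+1}\Xi_{i+1}=2\begin{pmatrix}\eta_i\mu_G I & \eta_i V^* \\ -\eta_{i+1}V & \eta_{i+1}\mu_{F^*}I\end{pmatrix}.
\]
This operator is \emph{not} self-adjoint, so I would then observe that the seminorm depends only on its symmetric part: $\norm{u}_T^2=\scp{Tu}{u}=\scp{\tfrac12(T+T^*)u}{u}$ for all $u$ (equivalently, $\scp{V^*y}{x}=\scp{Vx}{y}$ when expanding $\scp{Z_{i+1}\Xi_{i+1}u}{u}$ on $u=(x,y)$). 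Symmetrizing gives
\[
  \tfrac12\bigl(Z_{i+1}\Xi_{i+1}+(Z_{i+1}\Xi_{i+1})^*\bigr)=\begin{pmatrix}2\eta_i\mu_G I & (\eta_i-\eta_{i+1})V^* \\ (\eta_i-\eta_{i+1})V & 2\eta_{i+1}\mu_{F^*}I\end{pmatrix}.
\]

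Finally I would match the two block operators entrywise. The off-diagonal blocks already coincide. For the $(1,1)$ block, condition~\eqref{eq:def-phi} yields $\phi_{i+1}-\phi_i=2\phi_i\tau_i\mu_G=2\eta_i\mu_G$; for the $(2,2)$ block, condition~\eqref{eq:def-psi} applied at index $i+1$ yields $\psi_{i+2}-\psi_{i+1}=2\psi_{i+1}\sigma_{i+1}\mu_{F^*}=2\eta_{i+1}\mu_{F^*}$. Hence $Z_{i+2}M_{i+2}-Z_{i+1}M_{i+1}$ equals the symmetric part of $Z_{i+1}\Xi_{i+1}$, which gives $\norm{u}^2_{Z_{i+2}M_{i+2}-Z_{i+1}M_{i+1}}=\norm{u}^2_{Z_{i+1}\Xi_{i+1}}$ for all $u$.

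This is essentially a bookkeeping computation, and no genuine obstacle is expected; the only two points requiring care are (a) carrying out the index shift in Lemma~\ref{lem:lowerbound-ZM} correctly, keeping straight which of $\eta_i,\eta_{i+1},\phi_i,\phi_{i+1},\psi_{i+1},\psi_{i+2}$ appears in each block, and (b) recognizing that $Z_{i+1}\Xi_{i+1}$ is genuinely non-self-adjoint, so that the assertion is an identity of quadratic forms rather than of operators, the match being with the symmetrization of $Z_{i+1}\Xi_{i+1}$.
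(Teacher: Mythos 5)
Your proposal is correct and is essentially the paper's own argument: the paper computes $Z_{i+1}(M_{i+1}+\Xi_{i+1})-Z_{i+2}M_{i+2}$ and observes it is skew-symmetric (using \eqref{eq:def-phi}, \eqref{eq:def-psi} and $\phi_i\tau_i=\eta_i$, $\psi_{i+1}\sigma_{i+1}=\eta_{i+1}$), which is exactly your observation that $Z_{i+2}M_{i+2}-Z_{i+1}M_{i+1}$ equals the symmetric part of $Z_{i+1}\Xi_{i+1}$, so the two quadratic forms coincide. Your index bookkeeping and block computations match the paper's.
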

\begin{proof}
  We use the expression for $Z_{i+1}M_{i+1}$ from~(\ref{eq:estimate-ZM}) and the conditions~(\ref{eq:adjointpd}),~(\ref{eq:def-eta}),~(\ref{eq:def-phi}),~(\ref{eq:def-psi}) and~(\ref{eq:def-omega}) to get
  \begin{align*}
& Z_{i+1} M_{i+1} + Z_{i+1} \Xi_{i+1}  - Z_{i+2} M_{i+2} \\
=& \begin{pmatrix} \phi_i I & - \eta_i V^* \\ - \eta_i V & \psi_{i+1} I \end{pmatrix} + \begin{pmatrix} \phi_i I & 0 \\ 0 & \psi_{i+1}I \end{pmatrix} \begin{pmatrix} 2 \tau_i \mu_G I & 2\tau_i V^* \\ -2\sigma_{i+1} V & 2 \sigma_{i+1}  \mu_{F^*} I  \end{pmatrix} - 
 \begin{pmatrix} \phi_{i+1} I & - \eta_{i+1} V^* \\ - \eta_{i+1} V & \psi_{i+2} I \end{pmatrix}  \\
 =& \begin{pmatrix} \phi_i I & - \eta_i V^* \\ - \eta_i V & \psi_{i+1} I \end{pmatrix} + \begin{pmatrix} 2 \phi_i \tau_i \mu_G I & 2 \phi_i \tau_i V^* \\ -2\sigma_{i+1} \psi_{i+1} V & 2 \sigma_{i+1} \psi_{i+1} \mu_{F^*} I  \end{pmatrix} - 
 \begin{pmatrix} \phi_{i+1} I & - \eta_{i+1} V^* \\ - \eta_{i+1} V & \psi_{i+2} I \end{pmatrix}  \\
 =&  \begin{pmatrix} \phi_{i+1} I & \eta_{i} V^* \\ (- \eta_i - 2 \eta_{i+1}) V & \psi_{i+2} I \end{pmatrix} -  \begin{pmatrix} \phi_{i+1} I & - \eta_{i+1} V^* \\ - \eta_{i+1} V & \psi_{i+2} I \end{pmatrix} \\
 =&  \begin{pmatrix} 0 & (\eta_i + \eta_{i+1}) V^* \\ - (\eta_i + \eta_{i+1}) V & 0 \end{pmatrix}.
\end{align*}
This shows that $Z_{i+1} M_{i+1} + Z_{i+1} \Xi_{i+1}  - Z_{i+2} M_{i+2}$ is skew-symmetric, and hence, it holds for all $u$ that $\norm{u}^2_{Z_{i+1} ( M_{i+1} + \Xi_{i+1} ) - Z_{i+2} M_{i+2} } = 0$
from which the statement follows.
\end{proof}

\subsection{Technical estimates}

With the preparations from the previous subsection we are in position to estimate the term
\begin{align*}
  D & := \scp{\tilde{H}_{i+1}(u^{i+1})}{u^{i+1} - \hat{u}}_{Z_{i+1}
      W_{i+1}} - \frac12 \norm{u^{i+1} - \hat{u}}_{Z_{i+2}M_{i+2} -
      Z_{i+1}M_{i+1}}^2\\
  & = \scp{\tilde{H}_{i+1}(u^{i+1})}{u^{i+1} - \hat{u}}_{Z_{i+1}
      W_{i+1}} - \frac12 \norm{u^{i+1} - \hat{u}}_{Z_{i+1}\Xi_{i+1}}^2,
\end{align*}
which appears in Theorem~\ref{valkonen_theorem}. Recall that the operator $\tilde H_{i}$ is given
by~(\ref{eq:def-tildeH}), the preconditioner $M_{i}$ is given
by~(\ref{eq:def-precond}), the step length operator $W_{i}$ is given
by~(\ref{eq:def-steplength}), the test operator $Z_{i}$ is defined in~\eqref{eq:def-test-op},
and the operator $\Xi_{i}$ is defined in~\eqref{eq:def-Xi}.
In order to estimate $D$, we assume that both $F^*$ and $G$ are strongly convex functionals and choose the step length appropriately, and state the following estimate for $D$.

\begin{theorem}\label{thm:estimate-D}
  Let $i \in \NN$. Suppose that the
  conditions~(\ref{eq:adjointpd}),~(\ref{eq:def-eta}), (\ref{eq:def-omega}), (\ref{eq:def-phi}), and (\ref{eq:def-psi}) hold,
  that $G, F^*$ are $\gamma_G$/$\gamma_{F^{*}}$-strongly convex,
  respectively, with
  \begin{align}\label{eq:cond-gammaG}
    \gamma_G \geq \frac{\epsilon }{2 \omega_i} \norm{A-V} +
    \mu_G
  \end{align}
  for some $\epsilon>0$.
  Then it holds that
  \begin{align*}
    D \geq \eta_{i+1} ( \gamma_{F^*} - \mu_{F^*} -
    \frac{1+\omega_i}{2 \epsilon} \norm{A-V})  \norm{y^{i+1} -
    \hat{y}}^2 - \frac{\eta_i \epsilon \norm{A-V}}{2} \norm{x^{i+1} -
    x^i}^2.
  \end{align*}
\end{theorem}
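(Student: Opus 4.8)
The plan is to expand $D$ by writing out the two inner products in terms of the explicit block structure of $\tilde H_{i+1}$, $Z_{i+1}W_{i+1}$ and $Z_{i+1}\Xi_{i+1}$, and then exploit strong monotonicity of $\partial G$ and $\partial F^*$ together with the fixed-point inclusion $0\in H\hat u$. Concretely, the first inner product $\scp{\tilde H_{i+1}(u^{i+1})}{u^{i+1}-\hat u}_{Z_{i+1}W_{i+1}}$ splits, using the diagonal form of $Z_{i+1}W_{i+1} = \diag(\phi_i\tau_i I,\psi_{i+1}\sigma_{i+1}I) = \diag(\eta_i I,\eta_{i+1}I)$, into an $x$-part $\eta_i\scp{\partial G(x^{i+1})+V^*y^{i+1}}{x^{i+1}-\hat x}$ and a $y$-part $\eta_{i+1}\scp{\partial F^*(y^{i+1})-A[(1+\omega_i)x^{i+1}-\omega_i x^i]-\omega_i V(x^i-x^{i+1})}{y^{i+1}-\hat y}$. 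Since $-V^*\hat y\in\partial G(\hat x)$ and $A\hat x\in\partial F^*(\hat y)$, $\gamma_G$-strong monotonicity of $\partial G$ gives $\scp{\partial G(x^{i+1})+V^*\hat y}{x^{i+1}-\hat x}\ge\gamma_G\norm{x^{i+1}-\hat x}^2$, and likewise for $\partial F^*$; the remaining cross terms involving $V^*y^{i+1}$, $A$, $V$ must be collected carefully.

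Next I would subtract the $\Xi_{i+1}$ term. Using $Z_{i+1}\Xi_{i+1} = 2\begin{pmatrix}\eta_i\mu_G I & \eta_i V^*\\ -\eta_{i+1}V & \eta_{i+1}\mu_{F^*}I\end{pmatrix}$ (again via $\phi_i\tau_i=\eta_i$, $\psi_{i+1}\sigma_{i+1}=\eta_{i+1}$), the quadratic form $\tfrac12\norm{u^{i+1}-\hat u}^2_{Z_{i+1}\Xi_{i+1}}$ equals $\eta_i\mu_G\norm{x^{i+1}-\hat x}^2 + \eta_{i+1}\mu_{F^*}\norm{y^{i+1}-\hat y}^2 + \eta_i\scp{x^{i+1}-\hat x}{V^*(y^{i+1}-\hat y)} - \eta_{i+1}\scp{y^{i+1}-\hat y}{V(x^{i+1}-\hat x)}$; note the antisymmetric cross terms $\eta_i\scp{V(x^{i+1}-\hat x)}{y^{i+1}-\hat y}-\eta_{i+1}\scp{y^{i+1}-\hat y}{V(x^{i+1}-\hat x)} = (\eta_i-\eta_{i+1})\scp{V(x^{i+1}-\hat x)}{y^{i+1}-\hat y}$ do not cancel since $\eta_i\ne\eta_{i+1}$ in general, so they must be tracked. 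After these substitutions, the $\mu_G$ and $\mu_{F^*}$ pieces combine with the strong-convexity constants to produce the $(\gamma_G-\mu_G)$ and $(\gamma_{F^*}-\mu_{F^*})$ coefficients, and the goal becomes showing that all the leftover mixed terms involving $V^*y^{i+1}$, $A$-applied-to-the-extrapolation, and the $\omega_i V(x^i-x^{i+1})$ correction can be reorganized so that everything that survives is controlled by $\norm{A-V}$.

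The crux is the term mixing the primal and dual variables through the \emph{difference} $A-V$: the combination $\scp{(A-V)(x^{i+1}-\hat x)}{y^{i+1}-\hat y}$ and a term like $\scp{(A-V)(x^{i+1}-x^i)}{y^{i+1}-\hat y}$ (coming from the $-\omega_i V(x^i-x^{i+1})$ part of $\tilde H_{i+1}$ against $A$-terms and from the $\bar x^{i+1}$ extrapolation) must be estimated. Here I would apply Young's inequality twice: to $\eta_i\scp{(A-V)(x^{i+1}-\hat x)}{y^{i+1}-\hat y}$ with weight chosen so the $\norm{x^{i+1}-\hat x}^2$ part is absorbed into $\gamma_G$ — this is exactly where condition \eqref{eq:cond-gammaG}, $\gamma_G\ge\tfrac{\epsilon}{2\omega_i}\norm{A-V}+\mu_G$, enters, with the factor $\epsilon$ being the Young parameter (and $\eta_{i+1}\omega_i=\eta_i$ converting the coefficients) — and to the $\norm{x^{i+1}-x^i}$-type term, producing the $-\tfrac{\eta_i\epsilon\norm{A-V}}{2}\norm{x^{i+1}-x^i}^2$ slack and an extra $\tfrac{\eta_{i+1}}{2\epsilon}\norm{A-V}\norm{y^{i+1}-\hat y}^2$ contribution, which together with the $\tfrac{\eta_{i+1}\omega_i}{2\epsilon}\norm{A-V}\norm{y^{i+1}-\hat y}^2$ from the first application yields the coefficient $\tfrac{1+\omega_i}{2\epsilon}\norm{A-V}$ subtracted from $\gamma_{F^*}-\mu_{F^*}$. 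I expect the bookkeeping of the several $A$- and $V$-cross terms (making sure they genuinely assemble into multiples of $A-V$ rather than $A$ or $V$ alone, using the monotonicity-generated cancellations of the pure-$V$ and pure-$A$ bilinear terms) to be the main obstacle; the Young's-inequality step itself is routine once the right groupings are identified.
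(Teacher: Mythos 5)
Your proposal is correct and follows essentially the same route as the paper's proof: rewrite $\tilde H_{i+1}(u^{i+1})$ as $H(u^{i+1})$ plus an $(A-V)$-correction, use $-V^*\hat y\in\partial G(\hat x)$, $A\hat x\in\partial F^*(\hat y)$ and strong monotonicity, observe that the remaining bilinear terms assemble into multiples of $A-V$, and apply Young's inequality with parameter $\epsilon$ twice, absorbing the $\norm{x^{i+1}-\hat x}^2$ contribution via \eqref{eq:cond-gammaG}. The only bookkeeping slip is the attribution of weights: the $(A-V)(\hat x-x^{i+1})$ term carries $\eta_{i+1}$ while the increment term carries $\eta_i=\eta_{i+1}\omega_i$ (which is exactly why \eqref{eq:cond-gammaG} contains the factor $1/\omega_i$), but your aggregate coefficients $\tfrac{\eta_{i+1}(1+\omega_i)}{2\epsilon}\norm{A-V}$ and $\tfrac{\eta_i\epsilon\norm{A-V}}{2}$ are the correct ones.
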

\begin{proof}
  We observe
  \begin{align*}
    - \frac12 \norm{u^{i+1} - \hat{u} }^2_{Z_{i+1} \Xi_{i+1}} =&  (\eta_{i+1} - \eta_i) \scp{V (x^{i+1} - \hat{x})}{y^{i+1} - \hat{y} } \\
                                                               &- \eta_i \mu_G \norm{x^{i+1} - \hat{x}}^2 - \eta_{i+1} \mu_{F^*} \norm{y^{i+1} - \hat{y}}^2,
  \end{align*}
  which gives, by definition of $\tilde H_{i+1}$ in~(\ref{eq:def-tildeH}) and $H$ in~(\ref{eq:def_H}),
  \begin{align}\label{eq:def-D}
    \begin{split}
    D =& \scp{\tilde{H}_{i+1}(u^{i+1})}{u^{i+1} - \hat{u}}_{Z_{i+1} W_{i+1}} - \frac12 \norm{u^{i+1} - \hat{u} }^2_{Z_{i+1} \Xi_{i+1}}  \\
    =& \scp{H(u^{i+1})}{u^{i+1} - \hat{u}}_{Z_{i+1} W_{i+1}} \\
        &+ \eta_{i+1} \scp{ (A-V)(x^{i+1} - \bar{x}^{i+1})}{y^{i+1} - \hat{y}} \\
        &+ (\eta_{i+1} - \eta_i) \scp{V (x^{i+1} - \hat{x})}{y^{i+1} - \hat{y} } \\
        &- \eta_i \mu_G \norm{x^{i+1} - \hat{x}}^2 - \eta_{i+1} \mu_{F^*} \norm{y^{i+1} - \hat{y}}^2.
      \end{split}
  \end{align}
  Now we estimate the first term on the right hand side: Since $\hat{u} \in H^{-1}(0)$ with
  $\hat{u} = \begin{pmatrix} \hat{x} \\ \hat{y} \end{pmatrix}$, we
  have
  \begin{align*}
     -V^* \hat{y} \in \partial G(\hat{x}) \qquad\text{and}\qquad
    A \hat{x} \in \partial F^*(\hat{y})
  \end{align*}
  Hence we get
  \begin{align*}
    \scp{H(u^{i+1})}{u^{i+1} - \hat{u}}_{Z_{i+1} W_{i+1}} =&\; \scp{H(u^{i+1})- H(\hat{u})}{u^{i+1} - \hat{u}}_{Z_{i+1} W_{i+1}} \\
    =&\; \eta_i \scp{\partial G(x^{i+1}) - \partial G(\hat x)}{x^{i+1}-\hat{x}} \\
    &\quad+ \eta_{i+1} \scp{ \partial F^*(y^{i+1}) - \partial F^{*}(\hat y)}{y^{i+1}-\hat{y}} \\
                                                           &\quad+ \eta_i \scp{V^*(y^{i+1} - \hat{y})}{x^{i+1} - \hat{x}} \\ &\quad+ \eta_{i+1} \scp{A(\hat{x} - x^{i+1})}{y^{i+1}-\hat{y}}.
  \end{align*}
  Now the strong convexity of $G$ and $F^*$ with constants $\gamma_{G}$ and $\gamma_{F^{*}}$, respectively, results in the inequality
  \begin{align*}
    \scp{H(u^{i+1})}{u^{i+1} - \hat{u}}_{Z_{i+1} W_{i+1}} \geq&\; \eta_i \gamma_G \norm{x^{i+1} - \hat{x}}^2 + \eta_{i+1} \gamma_{F^*} \norm{y^{i+1} - \hat{y}}^2 \\
                                                              &\quad+ \eta_i \scp{ V(x^{i+1} - \hat{x}) }{ y^{i+1} - \hat{y} } \\ &\quad+ \eta_{i+1} \scp{ A(\hat{x} - x^{i+1}) }{ y^{i+1} - \hat{y} }.
  \end{align*}
  Plugging this into the definition of $D$ in~(\ref{eq:def-D}) and collecting terms gives
  \begin{align*}
    D \geq&\; \eta_i (\gamma_G-\mu_G) \norm{x^{i+1} - \hat{x}}^2 + \eta_{i+1} (\gamma_{F^*} - \mu_{F^*}) \norm{y^{i+1} - \hat{y}}^2 \\
          &\quad+ \eta_i \scp{ V(x^{i+1} - \hat{x}) }{ y^{i+1} - \hat{y} } + \eta_{i+1} \scp{ A(\hat{x} - x^{i+1}) }{ y^{i+1} - \hat{y} } \\
          &\quad+ \eta_{i+1} \scp{ V(\bar{x}^{i+1} - x^{i+1}) }{ y^{i+1} - \hat{y} } + (\eta_{i+1} - \eta_i) \scp{ V(x^{i+1} - \hat{x} ) }{ y^{i+1} - \hat{y} } \\
    =&\;  \eta_i (\gamma_G-\mu_G) \norm{x^{i+1} - \hat{x}}^2 + \eta_{i+1} (\gamma_{F^*} - \mu_{F^*}) \norm{y^{i+1} - \hat{y}}^2 \\
          &\quad+ \eta_{i+1} \scp{ (A-V)(\hat{x} - x^{i+1}) }{ y^{i+1} - \hat{y} } + \eta_{i+1} \scp{ (A-V)( x^{i+1} - \bar{x}^{i+1}) }{ y^{i+1} - \hat{y} } .
  \end{align*}
  We use the extrapolation $\bar{x}^{i+1} - x^{i+1} = \omega_i (x^{i+1} - x^i)$ to get for every $\epsilon>0$
  \begin{align*}
    \scp{ (A-V)(x^{i+1} - \bar{x}^{i+1}) }{ y^{i+1} - \hat{y} } &= \omega_i \scp{ (A-V)(x^i - x^{i+1}) }{ y^{i+1} - \hat{y} } \\
                                                                &\geq - \omega_i \norm{A-V} \norm{x^i -x^{i+1} } \norm{ y^{i+1} - \hat{y} } \\
                                                                &\geq - \frac{\omega_i \norm{A-V} }{2} \left( \epsilon \norm{x^i -x^{i+1} }^2 + \frac{1}{\epsilon}  \norm{ y^{i+1} - \hat{y} }^2 \right)
  \end{align*}
 by Young's inequality. Similarly we derive
  \begin{align*}
    \scp{(A-V)(\hat{x} - x^{i+1}) }{ y^{i+1} - \hat{y} } &\geq - \norm{A-V} \norm{\hat{x} - x^{i+1} } \norm{ y^{i+1} - \hat{y} } \\
                                                         &\geq - \frac{\norm{A-V}}{2} \left( \epsilon \norm{\hat{x} - x^{i+1} }^2 +  \frac{1}{\epsilon} \norm{ y^{i+1} - \hat{y} }^2 \right).
  \end{align*}
  With these two estimates we continue to lower bound $D$ and arrive at
  \begin{align*}
    D \geq & \eta_i (\gamma_G-\mu_G) \norm{x^{i+1} - \hat{x}}^2 + \eta_{i+1} \left(\gamma_{F^*} - \mu_{F^*}\right) \norm{y^{i+1} - \hat{y}}^2  \\
           & - \eta_{i+1} \epsilon \frac{\norm{A-V}}{2}  \norm{ x^{i+1} - \hat{x} }^2  - \eta_{i+1} \frac{\norm{A-V}}{2 \epsilon}   \norm{ y^{i+1} - \hat{y} }^2  \\
           & - \eta_{i+1} \epsilon \frac{\omega_i \norm{A-V} }{2}  \norm{x^i -x^{i+1} }^2 - \eta_{i+1} \frac{\omega_i \norm{A-V} }{2 \epsilon}   \norm{ y^{i+1} - \hat{y} }^2  \\
    =& \eta_{i+1} (\gamma_{F^*} - \mu_{F^*} - \frac{(1+\omega_i) \norm{A-V}}{2 \epsilon} ) \norm{y^{i+1} - \hat{y}}^2 \\
           & +  [ \eta_i (\gamma_G-\mu_G) - \eta_{i+1} \epsilon \frac{\norm{A-V}}{2}] \norm{x^{i+1} - \hat{x}}^2  \\
           & - \eta_{i+1} \epsilon \frac{\omega_i \norm{A-V}}{2} \norm{x^{i+1} - x^i }^2 \\
    \geq& \left[\eta_{i+1} ( \gamma_{F^*} - \mu_{F^*} - \frac{1+\omega_i}{2 \epsilon} \norm{A-V}) \right] \norm{y^{i+1} - \hat{y}}^2 - \frac{\eta_i \epsilon \norm{A-V}}{2} \norm{x^{i+1} - x^i}^2.
  \end{align*}
\end{proof}

The next lemma provides an estimate for $-\Delta_{i+1}$.

\begin{lemma}\label{lem:estimate_D}
  Let $i \in \NN$, the assumptions of Theorem~\ref{thm:estimate-D} be fulfilled and assume furthermore that~(\ref{eq:cond-psi}), (\ref{eq:cond-gammaG}) and
  \begin{align}
    \phi_i &\geq \frac{\epsilon \eta_i}{\delta} \norm{A-V} \label{eq:cond-phi} 
  \end{align} hold with $0<\delta\leq\kappa <1$.
  Define
  \begin{align}\label{eq:def-S}
  S_{i+1} = \begin{pmatrix} (\delta \phi_i - \epsilon \eta_i
    \norm{A-V}) I & 0 \\ 0 & \psi_{i+1} I - \frac{\eta_i^2}{\phi_i
      (1-\kappa)} VV^*\end{pmatrix},
  \end{align}
  and assume that
  \begin{align*}
    \frac12 \norm{u^{i+1} - u^i }_{S_{i+1}}^2 + \eta_{i+1}
    (\gamma_{F^*} - \mu_{F^*} - \frac{1+\omega_i}{2 \epsilon} \norm{A-V}
    ) \norm{ y^{i+1} - \hat{y} }^2 \geq - \Delta_{i+1},
  \end{align*}
  is fulfilled. Then it holds that
  \begin{align*}
    -\Delta_{i+1} \leq \frac12 \norm{ u^{i+1} - u^i }_{Z_{i+1} M_{i+1}}^2 + D.
  \end{align*}
\end{lemma}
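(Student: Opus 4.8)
The plan is to combine the lower bound on $D$ supplied by Theorem~\ref{thm:estimate-D} with the operator lower bound on $Z_{i+1}M_{i+1}$ from Lemma~\ref{lem:lowerbound-ZM}, and then to read off the claim from the inequality assumed in the statement; no genuinely new ingredient is needed. First I would add $\tfrac12\norm{u^{i+1}-u^i}_{Z_{i+1}M_{i+1}}^2$ to both sides of the estimate
\[
  D \geq \eta_{i+1}\bigl(\gamma_{F^*}-\mu_{F^*}-\tfrac{1+\omega_i}{2\epsilon}\norm{A-V}\bigr)\norm{y^{i+1}-\hat y}^2 - \tfrac{\eta_i\epsilon\norm{A-V}}{2}\norm{x^{i+1}-x^i}^2
\]
from Theorem~\ref{thm:estimate-D}, all of whose hypotheses are in force here.

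Next I would use Lemma~\ref{lem:lowerbound-ZM} with the $\delta\in(0,\kappa]$ from the present statement: testing the operator inequality~\eqref{eq:estimate-ZM} against the increment $u^{i+1}-u^i=\smallpmatrix{x^{i+1}-x^i\\ y^{i+1}-y^i}$ yields
\[
  \norm{u^{i+1}-u^i}_{Z_{i+1}M_{i+1}}^2 \;\geq\; \delta\phi_i\,\norm{x^{i+1}-x^i}^2 + \norm{y^{i+1}-y^i}_{\psi_{i+1}I-\frac{\eta_i^2}{\phi_i(1-\kappa)}VV^*}^2 .
\]
Substituting this into the previous line, the two $\norm{x^{i+1}-x^i}^2$ contributions merge into $\tfrac12(\delta\phi_i-\epsilon\eta_i\norm{A-V})\norm{x^{i+1}-x^i}^2$, which is exactly the action of the top-left block of $S_{i+1}$ on the $x$-increment, while the $VV^*$-term is exactly the action of the bottom-right block of $S_{i+1}$ on the $y$-increment. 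Collecting terms gives
\[
  \tfrac12\norm{u^{i+1}-u^i}_{Z_{i+1}M_{i+1}}^2 + D \;\geq\; \tfrac12\norm{u^{i+1}-u^i}_{S_{i+1}}^2 + \eta_{i+1}\bigl(\gamma_{F^*}-\mu_{F^*}-\tfrac{1+\omega_i}{2\epsilon}\norm{A-V}\bigr)\norm{y^{i+1}-\hat y}^2 ,
\]
and since the right-hand side is assumed to be at least $-\Delta_{i+1}$, the desired inequality $-\Delta_{i+1}\leq \tfrac12\norm{u^{i+1}-u^i}_{Z_{i+1}M_{i+1}}^2 + D$ follows. (Along the way, condition~\eqref{eq:cond-phi} makes the scalar $\delta\phi_i-\epsilon\eta_i\norm{A-V}$ nonnegative and~\eqref{eq:cond-psi} makes $\psi_{i+1}I-\frac{\eta_i^2}{\phi_i(1-\kappa)}VV^*$ positive semidefinite, so $S_{i+1}\geq0$ and $\norm{\dummy}_{S_{i+1}}$ is a genuine seminorm, while $0<\delta\leq\kappa<1$ is precisely what makes Lemma~\ref{lem:lowerbound-ZM} applicable and~\eqref{eq:cond-phi} meaningful.)

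I do not expect a real obstacle: the argument is bookkeeping once Theorem~\ref{thm:estimate-D} and Lemma~\ref{lem:lowerbound-ZM} are available. The only points needing attention are that the factor $\tfrac12$ is tracked consistently — Theorem~\ref{thm:estimate-D} already states its Young-type bound with an explicit $\tfrac12$, so the two $\norm{x^{i+1}-x^i}^2$ coefficients line up exactly — and that the $\delta$ used to invoke Lemma~\ref{lem:lowerbound-ZM} is literally the $\delta$ that appears in $S_{i+1}$ and in~\eqref{eq:cond-phi}.
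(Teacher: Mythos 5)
Your proposal is correct and follows essentially the same route as the paper: both rest on the block decomposition of $S_{i+1}$ via the lower bound on $Z_{i+1}M_{i+1}$ from Lemma~\ref{lem:lowerbound-ZM} together with the estimate of Theorem~\ref{thm:estimate-D}, merely traversing the resulting chain of inequalities in the opposite direction (you build up $\tfrac12\norm{u^{i+1}-u^i}_{Z_{i+1}M_{i+1}}^2+D$ from below, the paper bounds $-\Delta_{i+1}$ from above). The bookkeeping of the factor $\tfrac12$ and the role of~\eqref{eq:cond-phi} and~\eqref{eq:cond-psi} are handled correctly.
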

\begin{proof}
In the first step, we rewrite
\begin{align*}
S_{i+1} &= \begin{pmatrix} (\delta \phi_i - \epsilon \eta_i \norm{A-V}) I & 0 \\ 0 & \psi_{i+1} I - \frac{\eta_i^2}{\phi_i (1-\kappa)} VV^*\end{pmatrix} \\
&= \underbrace{\begin{pmatrix} \delta \phi_i I & 0 \\ 0 & \psi_{i+1} I - \frac{\eta_i^2}{\phi_i (1-\kappa)} VV^*\end{pmatrix}}_{\leq Z_{i+1}M_{i+1}} - \begin{pmatrix} \epsilon \eta_i \norm{A-V} I & 0 \\ 0 & 0 \end{pmatrix},
\end{align*}
so Lemma 2.2 gives
\begin{align*}
  \frac12 \norm{ u^{i+1} - u^i }_{S_{i+1}}^2 \leq \frac12  \norm{ u^{i+1} - u^i }_{Z_{i+1} M_{i+1}}^2 - \frac{\eta_i \epsilon \norm{A-V}}{2} \norm{ x^{i+1} - x^i }^2.
\end{align*}
Hence, 
\begin{align*}
-\Delta_{i+1} &\leq \frac12 \norm{u^{i+1} - u^i }_{S_{i+1}}^2 + \eta_{i+1} (\gamma_{F^*} - \mu_{F^*} - \frac{1+\omega_i}{2 \epsilon} \norm{A-V} ) \norm{ y^{i+1} - \hat{y} }^2 \\
              &\leq \frac12  \norm{ u^{i+1} - u^i }_{Z_{i+1} M_{i+1}}^2
   - \frac{\eta_i \epsilon \norm{A-V}}{2} \norm{ x^{i+1} - x^i }^2 \\ &\qquad+ \eta_{i+1} (\gamma_{F^*} - \mu_{F^*} - \frac{1+\omega_i}{2 \epsilon} \norm{A-V} ) \norm{ y^{i+1} - \hat{y} }^2
\end{align*}
which shows the claim since Theorem~\ref{thm:estimate-D} shows that the last two terms are a lower bound for $D$.
\end{proof}

Now we state the following abstract convergence result which concludes this section:

\begin{theorem}\label{thm:estimate-norm-uN-hatu}
  Suppose that the step length
  conditions~(\ref{eq:adjointpd}),~(\ref{eq:def-eta}), (\ref{eq:def-omega}), (\ref{eq:cond-psi})--(\ref{eq:def-psi}) and (\ref{eq:cond-phi}) hold with
  $\epsilon >0$, $0 < \delta \leq \kappa < 1$ and for all $i \in \NN$. Additionally suppose
  that $G, F^*$ are $\gamma_G$/$\gamma_{F^{*}}$-strongly convex,
  respectively, and that (\ref{eq:cond-gammaG}) holds.
  Furthermore, let $S_{i+1}$ be defined by~(\ref{eq:def-S}) and $\hat u =
  \begin{pmatrix}
    \hat x\\\hat y
  \end{pmatrix}$ fulfill $0\in\tilde H(\hat u)$.
  Then it holds
  \begin{align*}
    \frac12 \norm{u^N - \hat{u}}^2_{Z_{N+1}M_{N+1}} \leq \frac12 \|u^0-\hat{u}\|^2_{Z_1M_1} + \sum_{i=0}^{N-1} \Delta_{i+1}
  \end{align*}
  for every $\Delta_{i}$ which fulfills
  \begin{align*}
  \frac12 \norm{u^{i+1} - u^i }_{S_{i+1}}^2 + \eta_{i+1}
  (\gamma_{F^*} - \mu_{F^*} - \frac{1+\omega_i}{2 \epsilon} \norm{A-V}
  ) \norm{ y^{i+1} - \hat{y} }^2 \geq - \Delta_{i+1}.
  \end{align*}

\end{theorem}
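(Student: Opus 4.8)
The plan is to obtain the asserted descent inequality as a direct application of Theorem~\ref{valkonen_theorem} on the product space $\mathcal U = X\times Y$, with $\tilde H_{i+1}$, $M_{i+1}$, $W_{i+1}$ the maps from~\eqref{eq:def-tildeH},~\eqref{eq:def-precond},~\eqref{eq:def-steplength} and $Z_{i+1}$ the test operator from~\eqref{eq:def-test-op}. Recall from the reformulation~\eqref{eq:HM-iteration} that the iterates $u^{i+1}$ solve~\eqref{eqn:firstcond}, and that $\hat u$, being a stationary point, satisfies $0\in H(\hat u)$ and hence $0\in\tilde H_{i+1}(\hat u)$ for every $i$; existence of the $u^{i+1}$ and of $\hat u$ is assumed. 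So only two hypotheses of Theorem~\ref{valkonen_theorem} remain to be checked: self-adjointness of $Z_{i+1}M_{i+1}$, and the core inequality~\eqref{eqn:secondcond}.

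Self-adjointness of $Z_{i+1}M_{i+1}$ is the first assertion of Lemma~\ref{lem:lowerbound-ZM}, which applies because conditions~\eqref{eq:adjointpd},~\eqref{eq:def-eta} and~\eqref{eq:def-omega} are in force; under the additional assumption~\eqref{eq:cond-psi} the same lemma also gives $Z_{i+1}M_{i+1}\geq 0$, so the relevant seminorms below are well defined. For~\eqref{eqn:secondcond} I would move the term $\tfrac12\norm{u^{i+1}-\hat u}^2_{Z_{i+2}M_{i+2}-Z_{i+1}M_{i+1}}$ to the left-hand side and observe that the left-hand side is then exactly the quantity
\[
  D = \scp{\tilde H_{i+1}(u^{i+1})}{u^{i+1}-\hat u}_{Z_{i+1}W_{i+1}} - \tfrac12\norm{u^{i+1}-\hat u}^2_{Z_{i+2}M_{i+2}-Z_{i+1}M_{i+1}}
\]
analysed in the Technical estimates subsection; here the lemma on $Z_{i+2}M_{i+2}-Z_{i+1}M_{i+1}$ (valid since~\eqref{eq:def-phi} and~\eqref{eq:def-psi} are assumed) lets one replace the last seminorm by $\norm{u^{i+1}-\hat u}^2_{Z_{i+1}\Xi_{i+1}}$. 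Thus~\eqref{eqn:secondcond} is seen to be equivalent to the inequality $-\Delta_{i+1}\leq\tfrac12\norm{u^{i+1}-u^i}^2_{Z_{i+1}M_{i+1}}+D$.

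This last inequality is precisely the conclusion of Lemma~\ref{lem:estimate_D}, and all of that lemma's hypotheses are among the present assumptions: the assumptions of Theorem~\ref{thm:estimate-D} (namely~\eqref{eq:adjointpd},~\eqref{eq:def-eta},~\eqref{eq:def-omega},~\eqref{eq:def-phi},~\eqref{eq:def-psi}, $\gamma_G$/$\gamma_{F^{*}}$-strong convexity, and~\eqref{eq:cond-gammaG}), together with~\eqref{eq:cond-psi},~\eqref{eq:cond-phi}, the range $0<\delta\leq\kappa<1$, and the fact that $\Delta_{i+1}$ is required to satisfy exactly the lower bound on $-\Delta_{i+1}$ stated in Lemma~\ref{lem:estimate_D}. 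Hence~\eqref{eqn:secondcond} holds for every such $\Delta_{i+1}$, and Theorem~\ref{valkonen_theorem} yields the claimed descent inequality. I expect the only real work to be bookkeeping: checking that the chain of step-length conditions is mutually consistent so that each cited lemma is applicable, and carefully matching the rearranged form of~\eqref{eqn:secondcond} with the statement of Lemma~\ref{lem:estimate_D} — no estimate beyond those already established is needed.
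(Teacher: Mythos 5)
Your proof is correct and takes essentially the same route as the paper: inequality~(\ref{eqn:secondcond}) is obtained by combining Lemma~\ref{lem:estimate_D} with the identity $\norm{u^{i+1}-\hat u}^2_{Z_{i+1}\Xi_{i+1}}=\norm{u^{i+1}-\hat u}^2_{Z_{i+2}M_{i+2}-Z_{i+1}M_{i+1}}$, and Theorem~\ref{valkonen_theorem} then yields the descent inequality. Only your side remark that $0\in H(\hat u)$ implies $0\in\tilde H_{i+1}(\hat u)$ is inaccurate (the second component of $\tilde H_{i+1}(\hat u)$ retains the extra term $-\omega_i(A-V)(\hat x-x^i)$), but nothing in the argument uses it: Theorem~\ref{valkonen_theorem} only requires $\hat u$ to be a stationary point of the iteration, and Lemma~\ref{lem:estimate_D} via Theorem~\ref{thm:estimate-D} only uses $\hat u\in H^{-1}(0)$.
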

\begin{proof}
We recognize from Lemma \ref{lem:estimate_D} that
\begin{align*}
    - \Delta_{i+1} 
                   \leq& \frac12 \norm{ u^{i+1} - u^i }_{Z_{i+1} M_{i+1}}^2 + \scp{ \tilde{H}_{i+1}(u^{i+1}) }{ u^{i+1} - \hat{u} }_{Z_{i+1} W_{i+1} } \\ &- \frac12 \norm{ u^{i+1} - \hat{u} }_{Z_{i+1} \Xi_{i+1}}^2.
\end{align*}
Using the equality
  \begin{align*}
  \norm{u^{i+1} - \hat{u} }_{Z_{i+1} \Xi_{i+1}}^2 = \norm{u^{i+1} - \hat{u} }_{Z_{i+2} M_{i+2} - Z_{i+1} M_{i+1}}^2
  \end{align*}
  we get
\begin{align*}
    - \Delta_{i+1} \leq& \frac12 \norm{ u^{i+1} - u^i }_{Z_{i+1} M_{i+1}}^2 + \scp{ \tilde{H}_{i+1}(u^{i+1}) }{ u^{i+1} - \hat{u} }_{Z_{i+1} W_{i+1} }  \\ & - \frac12 \norm{ u^{i+1} - \hat{u} }_{Z_{i+2} M_{i+2} - Z_{i+1} M_{i+1}}^2.
  \end{align*} 
  Rearranging these terms leads to
  \begin{align*}
    \scp{ \tilde{H}_{i+1}(u^{i+1}) }{ u^{i+1} - \hat{u} }_{Z_{i+1} W_{i+1} } &\geq \frac12 \norm{ u^{i+1} - \hat{u} }_{Z_{i+2} M_{i+2} - Z_{i+1} M_{i+1}}^2 \\ & \quad\quad-  \frac12 \norm{ u^{i+1} - u^i }_{Z_{i+1} M_{i+1}}^2 - \Delta_{i+1}
  \end{align*}
 and thus all conditions of Theorem \ref{valkonen_theorem} are satisfied, and the result follows.
\end{proof}

\section{Convergence rates}
\label{sec:convergence}
With the results from the previous section, we are in position to prove convergence of~(\ref{eq:cpmm}) under easily verifiable conditions.
We assume that both $G$ as well as $F^*$ are proper, strongly convex, and lower-semicontinuous functions, and that the stepsizes can be chosen such that we obtain linear convergence. 
We proceed as follows: First we derive conditions under which we can guarantee linear convergence, and then we show how to select the parameters in order to obtain a choice of the stepsizes that is simply to apply in practice. 

\begin{theorem}\label{thm:linear_conv_rate}
  Choose  $\mu_G > 0, \mu_{F^*} > 0$ and suppose that $G$ is $\gamma_G$-strongly convex and $F^*$ is $\gamma_{F^*}$-strongly convex with
  \begin{align}\label{eq:cond-tauG-tauF*}
    \gamma_G \geq \frac{\epsilon}{2 \omega_i} \norm{A-V} + \mu_G, \;\; \gamma_{F^*} \geq \frac{1+\omega_i}{2 \epsilon} \|A-V\| + \mu_{F^*}
  \end{align}
for some $\epsilon > 0$. Furthermore let $u^{N} = (x^{N},y^{N})^{T}$ be generated by the Chambolle-Pock method with mismatched adjoint (\ref{eq:cpmm}) and $\hat u = (\hat x,\hat y)^{T}$ be a fixed point of this iteration.
Then  with constant step lengths
\begin{align*}
\tau_i = \tau &:= \min \left\{ \frac{\epsilon^{-1} \delta}{\norm{A-V}}, \sqrt{ \frac{(1-\kappa)\mu_{F^*}}{\norm{V}^2 \mu_G}} \right\}, \\
\sigma_i = \sigma &:= \frac{\mu_{G} }{\mu_{F^*} } \tau, \\
\omega_i = \omega &:= (1+2 \tau \mu_G)^{-1}
\end{align*}
for some $0 \leq \delta \leq \kappa < 1$ it holds that $\norm{u^N - \hat{u}}^2 =\mathcal{O}(\omega^N)$.
\end{theorem}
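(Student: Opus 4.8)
The plan is to verify that the constant step-length choice in the statement satisfies all the hypotheses of Theorem~\ref{thm:estimate-norm-uN-hatu}, and then to extract the rate $\mathcal{O}(\omega^N)$ from the descent inequality by showing the right-hand side stays bounded while the left-hand side grows like $\omega^{-N}$. So the first task is bookkeeping: with $\tau_i=\tau$, $\sigma_i=\sigma$, $\omega_i=\omega$ constant, the recursions~\eqref{eq:def-phi} and~\eqref{eq:def-psi} become geometric, $\phi_{i+1}=\phi_i(1+2\tau\mu_G)=\phi_i/\omega$ and $\psi_{i+1}=\psi_i(1+2\sigma\mu_{F^*})$. I would check that the choice $\sigma=\tfrac{\mu_G}{\mu_{F^*}}\tau$ makes $1+2\sigma\mu_{F^*}=1+2\tau\mu_G=\omega^{-1}$, so that $\psi_i$ and $\phi_i$ grow at the \emph{same} geometric rate $\omega^{-1}$; together with $\eta_i=\phi_i\tau_i=\psi_i\sigma_i$ this forces $\omega_i=\eta_i/\eta_{i+1}=\omega$ to be consistent with~\eqref{eq:def-omega}, and~\eqref{eq:adjointpd} holds by construction since $Z_{i+1}M_{i+1}$ was already arranged to be self-adjoint. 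One should also fix the free normalization, e.g. $\phi_0$ and $\psi_0$, consistently (the standard choice is to pick $\psi_1$ so that $\psi_1=\tfrac{\eta_0^2}{\phi_0(1-\kappa)}\norm{V}^2$, i.e.\ condition~\eqref{eq:cond-psi} holds with equality at the first step, after which the equal growth rates of $\phi_i,\psi_i$ propagate it to all $i$).

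Next I would check the two nontrivial stepsize inequalities~\eqref{eq:cond-psi} and~\eqref{eq:cond-phi}. Condition~\eqref{eq:cond-phi} reads $\phi_i\ge \tfrac{\epsilon\eta_i}{\delta}\norm{A-V}=\tfrac{\epsilon\phi_i\tau}{\delta}\norm{A-V}$, i.e.\ $\tau\le \tfrac{\delta}{\epsilon\norm{A-V}}$, which is exactly the first term in the $\min$ defining $\tau$. Condition~\eqref{eq:cond-psi}, using $\eta_i=\phi_i\tau$, $\psi_{i+1}=\psi_i\omega^{-1}$, $\phi_i=\psi_i\sigma/\tau$ (from $\eta_i=\phi_i\tau=\psi_i\sigma$) and $\omega^{-1}=1+2\tau\mu_G$, reduces after simplification to $\sigma\tau\norm{V}^2\le (1-\kappa)\tfrac{\mu_{F^*}}{\mu_G}\cdot\tfrac{1}{1+2\tau\mu_G}$; substituting $\sigma=\tfrac{\mu_G}{\mu_{F^*}}\tau$ this becomes $\tau^2\norm{V}^2\le \tfrac{(1-\kappa)\mu_{F^*}}{\mu_G}\cdot\omega$, and a slightly cruder bound $\tau^2\le \tfrac{(1-\kappa)\mu_{F^*}}{\norm{V}^2\mu_G}$ (dropping the harmless factor $\omega<1$ on the wrong side — here one must be careful about the direction and either keep the $\omega$ or argue the looser bound still suffices) is the second term in the $\min$ defining $\tau$. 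The remaining hypotheses~\eqref{eq:cond-gammaG} and the $F^*$-strong-convexity bound are literally~\eqref{eq:cond-tauG-tauF*}, which is assumed. Thus all hypotheses of Theorem~\ref{thm:estimate-norm-uN-hatu} hold.

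With the hypotheses in force, I would choose $\Delta_{i+1}=0$ for all $i$: indeed, under~\eqref{eq:cond-tauG-tauF*} the coefficient $\gamma_{F^*}-\mu_{F^*}-\tfrac{1+\omega}{2\epsilon}\norm{A-V}\ge 0$, and $S_{i+1}\ge 0$ because $\delta\phi_i-\epsilon\eta_i\norm{A-V}\ge 0$ by~\eqref{eq:cond-phi} and $\psi_{i+1}I-\tfrac{\eta_i^2}{\phi_i(1-\kappa)}VV^*\ge 0$ by~\eqref{eq:cond-psi}; hence the left-hand side of the $\Delta_{i+1}$-inequality in Theorem~\ref{thm:estimate-norm-uN-hatu} is $\ge 0\ge -\Delta_{i+1}$. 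Therefore
\[
\tfrac12\norm{u^N-\hat u}^2_{Z_{N+1}M_{N+1}}\le \tfrac12\norm{u^0-\hat u}^2_{Z_1M_1}=:C_0,
\]
a constant independent of $N$. Finally I would convert the tested norm into the plain norm: by Lemma~\ref{lem:lowerbound-ZM}, $Z_{N+1}M_{N+1}\ge \operatorname{diag}(\delta\phi_N I,\ \psi_{N+1}I-\tfrac{\eta_N^2}{\phi_N(1-\kappa)}VV^*)\ge \operatorname{diag}(\delta\phi_N I, 0)$, and more usefully one also has a lower bound of the form $c\min\{\phi_N,\psi_{N+1}\}\,\norm{u^N-\hat u}^2$ for a positive constant $c$ (using again that the Schur-type remainder is $\ge 0$, or by a slightly more careful estimate keeping a fraction of both diagonal blocks as in~\eqref{eq:estimate-ZM} with $\delta<\kappa$). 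Since $\phi_N=\phi_0\,\omega^{-N}$ and $\psi_{N+1}=\psi_1\,\omega^{-N}$ both grow like $\omega^{-N}$, this gives $\norm{u^N-\hat u}^2\le \tfrac{2C_0}{c\min\{\phi_0,\psi_1\}}\,\omega^{N}=\mathcal{O}(\omega^N)$, as claimed.

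I expect the main obstacle to be the algebra in step two — reconciling condition~\eqref{eq:cond-psi} with the clean $\min$-formula for $\tau$, where the factor $\omega=(1+2\tau\mu_G)^{-1}$ enters and one must be scrupulous about which side of the inequality it lands on (and hence whether the stated $\tau$ is exactly right or whether one is using a safe over-estimate); a secondary nuisance is pinning down the lower bound of $Z_{N+1}M_{N+1}$ by a positive multiple of the identity (the naive bound from Lemma~\ref{lem:lowerbound-ZM} only controls the primal block directly, so one needs either $\delta<\kappa$ to retain part of both blocks, or a separate argument that the $y$-block is coercive), but neither is conceptually deep.
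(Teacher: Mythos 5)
Your route is the paper's own: verify that the constant choices satisfy the step-length conditions, apply Theorem~\ref{thm:estimate-norm-uN-hatu} with $\Delta_{i+1}=0$ (legitimate because $S_{i+1}\ge 0$ and $\gamma_{F^*}-\mu_{F^*}-\tfrac{1+\omega}{2\epsilon}\norm{A-V}\ge 0$ by \eqref{eq:cond-tauG-tauF*}), and read the rate off the geometric growth $\phi_N=\phi_0(1+2\tau\mu_G)^N$, $\psi_{N+1}=\psi_1(1+2\tau\mu_G)^N$. However, the step you yourself single out as the main obstacle --- reconciling \eqref{eq:cond-psi} with the $\min$-formula for $\tau$ --- is left unresolved in your write-up, and your intermediate simplification of it is incorrect. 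With $\eta_i=\tau\phi_i$ and $\psi_{i+1}=\eta_{i+1}/\sigma=\tau\phi_i(1+2\tau\mu_G)/\sigma$, condition \eqref{eq:cond-psi} is equivalent to
\begin{align*}
\tau\sigma\norm{V}^2\;\le\;(1-\kappa)\,(1+2\tau\mu_G),
\end{align*}
so the factor $\omega^{-1}=1+2\tau\mu_G\ge 1$ lands on the \emph{favorable} side (there is no factor $\omega$ on the right, and no stray ratio $\mu_{F^*}/\mu_G$). Hence the cruder requirement $\tau\sigma\le(1-\kappa)/\norm{V}^2$, which after substituting $\sigma=\tfrac{\mu_G}{\mu_{F^*}}\tau$ is exactly $\tau^2\le(1-\kappa)\mu_{F^*}/(\norm{V}^2\mu_G)$, i.e.\ the second entry of the $\min$, is sufficient with room to spare; this is precisely the chain of equivalences in the paper's proof, and your worry about ``which side the $\omega$ lands on'' dissolves once the algebra is done correctly.

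The secondary point you flag (coercivity of the dual block of $Z_{N+1}M_{N+1}$) also needs this same slack, and your first suggested fix --- ``the Schur-type remainder is $\ge 0$'' --- is not enough by itself, since semidefiniteness gives no positive lower bound. But the sharp form of the condition just derived reads $\omega\psi_{N+1}\ge\eta_N^2\norm{V}^2/(\phi_N(1-\kappa))$, so the dual block is bounded below by $(1-\omega)\psi_{N+1}I$ with $1-\omega>0$; equivalently, the paper simply computes the coefficient $\tfrac{\tau}{\sigma}(1+2\tau\mu_G)-\tfrac{\tau^2\norm{V}^2}{1-\kappa}\ge\tfrac{2\tau^2\mu_G}{\sigma}>0$ explicitly. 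Two small further remarks: your proposed normalization ``choose $\psi_1$ so that \eqref{eq:cond-psi} holds with equality'' is not an available degree of freedom, since \eqref{eq:cond-psi} is invariant under rescaling of $(\phi_i,\psi_i)$ --- the only freedom (fixed in the paper by $\phi_0=1/\tau$, $\psi_0=1/\sigma$) is the overall scale needed to make \eqref{eq:adjointpd} hold; and, exactly as in the paper's own estimate, the primal block is controlled only when $\delta>0$.
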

\begin{proof}
We set $\phi_0 := \frac{1}{\tau}$ and $\psi_0 := \frac{1}{\sigma}$ and by~(\ref{eq:def-phi}) we get
\[
  \phi_i \tau = \underbrace{\phi_0 \tau}_{=1} (1+2\tau \mu_G)^i = (1+2\tau \mu_G)^i
\]
By~(\ref{eq:adjointpd}),~(\ref{eq:def-eta}) and~(\ref{eq:def-psi}) we get
\begin{align*} 
\psi_{i+1} \sigma &= \psi_ {i} \sigma (1+2\sigma \mu_{F^*})= \psi_ {i} \sigma (1+2\tau \mu_{G}) \\
&= \underbrace{\psi_0 \sigma}_{=1} (1+2\tau \mu_{G})^{i+1} = (1+2\tau \mu_{G})^{i+1}.
\end{align*}
Hence, again by~(\ref{eq:def-eta}), we get
\[
  \eta_i = (1+2\tau \mu_{G})^i
\]
and, by~(\ref{eq:def-omega})
\[
  \omega = \frac{\eta_i}{\eta_{i+1}} = \frac{1}{1+2\tau \mu_{G}}.
\]
Now we claim that the matrix $S_{i+1}$ defined in~(\ref{eq:def-S}) is
positive semidefinite, which is equivalent to the conditions
$\delta\phi_{i} - \eta_{i}\epsilon\norm{A-V}\geq 0$ and
$\psi_{i+1}\geq \eta_{i}^{2}\norm{V}^{2}/(\phi_{i}(1-\kappa))$. The
first condition is fulfilled since $\tau\leq \delta/(\epsilon\norm{A-V})$ holds by the assumption in the theorem.
The second condition follows from the assumption $\tau^2 \leq (1-\kappa)\mu_{F^{*}}/(\norm{V}^{2}\mu_{G})$, since
\begin{align*}
\tau^2 \leq \frac{(1-\kappa) \mu_{F^*}}{\norm{V}^2 \mu_G} & \overset{\sigma = \frac{\mu_G}{\mu_{F^*}} \tau}{\Leftrightarrow} \tau \sigma \leq \frac{1-\kappa}{\norm{V}^2}  \\
& \overset{\eta_i = \sigma \psi_{i} = \tau \phi_i}{\Leftrightarrow}  \frac{\eta_{i+1} \eta_i}{\psi_{i+1} \phi_i} \leq \frac{1-\kappa}{\norm{V}^2} \\
& \overset{\omega = \eta_i / \eta_{i+1}}{\Leftrightarrow}  \frac{\eta_{i}^2}{\omega \psi_{i+1} \phi_i} \leq \frac{1-\kappa}{\norm{V}^2} 
\Leftrightarrow \psi_{i+1} \geq \frac{\eta_i^2 \norm{V}^2}{\phi_i (1- \kappa) \omega} \geq \frac{\eta_i^2 \norm{V}^2}{\phi_i (1- \kappa) }.
\end{align*}
Consequently, $S_{i+1}$ is positive semidefinite, so we can choose
$\Delta_{i+1} = 0$ for $i \in \NN$.
As a result, Theorem~\ref{thm:estimate-norm-uN-hatu} and Lemma~\ref{lem:estimate_D} results in
\begin{align*}
  \norm{ u^0 - \hat{u}}_{Z_1M_1}^2 &\geq \| u^N - \hat{u} \|^2_{Z_{N+1} M_{N+1}} \\
                                  &\geq \delta \phi_N \| x^N - \hat{x} \|^2 + \underbrace{\left( \psi_{N+1} - \frac{\eta_N^2 \|V\|^2}{\phi_N (1-\kappa)} \right)}_{\geq 0} \|y^N - \hat{y}\|^2 \\
                                  & = \phi_{N}\Big(\delta\norm{x^{N}-\hat x}^{2} + \big(\tfrac{\psi_{N+1}}{\phi_{N}} - \tfrac{\eta_{N}^{2}\norm{V}^2}{\phi_{N}^{2}(1-\kappa)}\big)\norm{y^{N}-\hat y}^{2}\Big).
\end{align*}
Using the properties 
$\eta_N = \tau \phi_N = \sigma \psi_N$ and $\eta_{N+1} = \omega \eta_N$ we arrive at
\begin{align*}
  \norm{u^0 - \hat{u}}_{Z_1M_1}^2 & \geq  \phi_{N}\Big(\delta\norm{x^{N}-\hat x}^{2} + \big(\tfrac{\psi_{N+1}}{\phi_{N}} - \tfrac{\eta_{N}^{2}\norm{V}^2}{\phi_{N}^{2}(1-\kappa)}\big)\norm{y^{N}-\hat y}^{2}\Big) \\
                                  & = (1+2\tau\mu_{G})^{N}\left(\tfrac{\delta}{\tau}\norm{x^{N}-\hat x}^{2} + \big(\tfrac{\tau}{\sigma}(1+2\tau\mu_{G}) - \frac{ \tau^2 \norm{V}^2}{(1-\kappa)} \big)\norm{y^{N}-\hat y}^{2}\right)
\end{align*}
which proves the claim.
\end{proof}

This above theorem is not immediately practical, since it is unclear, if all parameters can be chosen such that all conditions are fulfilled. Hence, we now derive a method that allows for a feasible  choice of the parameters.

 If we plug the definition of $\omega$ into conditions~(\ref{eq:cond-tauG-tauF*}), we get
  \begin{align}
    \gamma_{G}& \geq \tfrac{\epsilon}{2}(1+2\tau\mu_{G})\norm{A-V} + \mu_{G}\label{eq:muG-var}\\
    \gamma_{F^{*}}& \geq \tfrac{1+\tau\mu_{G}}{\epsilon(1+2\tau\mu_{G})}\norm{A-V} + \mu_{F^{*}}.\label{eq:muFs-var}
  \end{align}
  Since  $\tfrac12\leq \tfrac{1+t}{1+2t}\leq 1$ for $t>0$ , ~(\ref{eq:muFs-var}) is fulfilled, if
  \begin{align}\label{eq:muFs-var2}
    \gamma_{F^{*}} = \tfrac{\norm{A-V}}{\epsilon} + \mu_{F^{*}}.
  \end{align}
  We express the quantities $\mu_{G}$ and $\mu_{F^{*}}$ by $\mu_{F^{*}} = a\gamma_{F^{*}},\ \mu_{G} = b\gamma_{G}$ with $0\leq a,b\leq 1$.
  Then, we can use~(\ref{eq:muFs-var2}) to get a valid value for $\epsilon$, namely
  \begin{align*}
    \epsilon = \tfrac{\norm{A-V}}{(1-a)\gamma_{F^{*}}}.
  \end{align*}
  Furthermore, we observe that it is always beneficial to
  choose $\delta$ as large as possible, i.e. we set $\delta=\kappa$. 
  Additionally, using this $\epsilon$ and $\mu_{G}$ in~(\ref{eq:muG-var}), we get
  \begin{align*}
    \gamma_{G}& \geq \tfrac{1+2\tau b\gamma_{G}}{2(1-a)\gamma_{F^{*}}}\norm{A-V}^{2} + b\gamma_{G},
  \end{align*}
  which gives the inequality
  \begin{align}\label{eq:tau-upper-est}
    \tau \leq \tfrac{(1-b)(1-a)\gamma_{F^{*}}}{\norm{A-V}^{2}b} - \tfrac{1}{2b\gamma_{G}}.
  \end{align}
  On the other hand, we plug in $\delta=\kappa$, and the values for $\epsilon$, $\mu_{F^{*}}$ and $\mu_{G}$ into the definition of $\tau$ in Theorem~\ref{thm:linear_conv_rate} and get
  \begin{align*}
    \tau = \min\left\{\frac{\kappa(1-a)\gamma_{F^{*}}}{\norm{A-V}^{2}}, \sqrt{ \frac{(1-\kappa)a\gamma_{F^*}}{\norm{V}^2 b\gamma_G}} \right\}.
  \end{align*}
  For all choices of $\kappa$ and $a$, there exists a small $b \in (0,1)$, such that the minimal value is attained at the left expression in minimum.
  Hence, by choice, we choose our parameters in such a way, that 
  \begin{align*}
    \tau =\frac{\kappa(1-a)\gamma_{F^{*}}}{\norm{A-V}^{2}},
  \end{align*}
  so we have that
  \begin{align}\label{eq:defTauInequal}
    \tau^2 = \tfrac{\kappa^2 (1-a)^2 \gamma_{F^*}^2}{\norm{A-V}^4} \leq \tfrac{(1-\kappa) a\gamma_{F^{*}}}{\norm{V}^2 b \gamma_G}
  \end{align}
  must hold in the definition.
  Hence, we have to find $a, b$ and $\kappa$, such that
  \begin{align*}
     \tfrac{\kappa (1-a) \gamma_{F^*}}{\norm{A-V}^2} = \tau \overset{(\ref{eq:tau-upper-est})} \leq \tfrac{(1-b)(1-a)\gamma_{F^{*}}}{\norm{A-V}^{2}b} - \tfrac{1}{2b\gamma_{G}},
  \end{align*}
  which is equivalent to
   \begin{align}\label{eq:kappa-upper-bound}
    \kappa \leq \tfrac{1-b}{b} - \tfrac{\norm{A-V}^2}{2 b (1-a) \gamma_{F^*} \gamma_G} = \tfrac{1}{b} \left( 1-b - \tfrac{\norm{A-V}^2}{2 (1-a) \gamma_{F^*} \gamma_G}  \right).
  \end{align}
 Clearly, the upper bound increases with decreasing the value of $b$. Hence, we restrict ourselves to $b \leq \tfrac{1}{2}$ and use our degrees of freedom to set $a = \tfrac{1}{2}$. Now~(\ref{eq:defTauInequal}) turns into
 \begin{align}
    \tfrac{\kappa^2 \gamma_{F^*}}{4 \norm{A-V}^4} \leq \tfrac{1}{b} \tfrac{1-\kappa}{2 \norm{V}^2 \gamma_G},
  \end{align}
  which is equivalent to the condition
  \begin{align}
   b \leq \tfrac{1- \kappa}{\kappa^2} \tfrac{\norm{A-V}^4}{\norm{V}^2} \tfrac{2}{\gamma_{F^*} \gamma_G}
  \end{align}
  on $b$. To satisfy~(\ref{eq:kappa-upper-bound}), we require $b \leq \frac12$ and
  \begin{align}\label{eq:defKappaPos}
	\kappa \leq \tfrac{1}{b} \left(\tfrac{1}{2} - \tfrac{\norm{A-V}^2}{\gamma_{F^*} \gamma_G} \right).
  \end{align}
  The later is positive, whenever
  \begin{align}
  2 \norm{A-V}^2 < \gamma_{F^*} \gamma_G
  \end{align}
  is fulfilled. So by (\ref{eq:defKappaPos}) we are able to find a small enough $b$ for every $\kappa \in (0,1)$, such that the corresponding inequality holds. In conclusion, we have proven:
 \begin{theorem}\label{thm:existence_linear_convergence}
   Suppose that $G, F^*$ are $\gamma_G/\gamma_{F^*}$-strongly convex functions fulfilling
   $$\gamma_{F^*} \gamma_G > 2 \norm{A-V}^2.$$ 
   Define
   $$b = \min\left\{\tfrac{1}{2}, \tfrac{1}{\kappa} \left(\tfrac{1}{2} - \tfrac{\norm{A-V}^2}{\gamma_{F^*} \gamma_G} \right), \tfrac{1-\kappa}{\kappa^2} \tfrac{\norm{A-V}^4}{\norm{V}^2} \tfrac{2}{\gamma_{F^*} \gamma_G}\right\}$$
   for some arbitrary $\kappa \in (0,1)$. Furthermore let $\hat{u} = (\hat x,\hat y)^{T}$ be a fixed point of the Chambolle-Pock method with mismatched adjoint and constant step lengths
\begin{align*}
\tau_i = \tau &:= \sqrt{ \frac{(1-\kappa)\gamma_{F^*}}{2 b \norm{V}^2 \gamma_G}}, \\
\sigma_i = \sigma &:= 2 b \frac{\gamma_{G} }{\gamma_{F^*} } \tau, \\
\omega_i = \omega &:= (1+2 b \tau \gamma_G)^{-1}.
\end{align*}
Then the iterates $(u^{N})_{N \in \NN}$ converge
with $\norm{u^N - \hat{u}}^2$ decaying to zero at the rate $\mathcal{O}(\omega^N)$.
 \end{theorem}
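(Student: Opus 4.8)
The statement is essentially a bookkeeping corollary of Theorem~\ref{thm:linear_conv_rate}: I need only verify that, under the hypothesis $\gamma_{F^*}\gamma_G > 2\norm{A-V}^2$ and with the explicit choice of $b$, $\kappa$, $\tau$, $\sigma$, $\omega$ given in the statement, all the hypotheses of Theorem~\ref{thm:linear_conv_rate} are met. The discussion preceding the statement has already done most of this work; the task of the proof is to collect the reductions in the right order. So the plan is: first fix $\kappa\in(0,1)$ arbitrarily, set $a=\tfrac12$ and let $b$ be the prescribed minimum of three quantities; then check that $b\in(0,\tfrac12]$ (here one uses $2\norm{A-V}^2<\gamma_{F^*}\gamma_G$ to ensure the second quantity in the minimum is strictly positive, and observe the third is automatically positive). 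Next, set $\mu_{F^*} = a\gamma_{F^*} = \tfrac12\gamma_{F^*}$, $\mu_G = b\gamma_G$, and $\epsilon = \norm{A-V}/((1-a)\gamma_{F^*}) = 2\norm{A-V}/\gamma_{F^*}$, matching the substitutions in the preceding paragraphs.

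With these choices, the second step is to verify the two strong-convexity conditions~(\ref{eq:cond-tauG-tauF*}) of Theorem~\ref{thm:linear_conv_rate}, i.e.\ that $\gamma_G \geq \tfrac{\epsilon}{2\omega}\norm{A-V} + \mu_G$ and $\gamma_{F^*}\geq \tfrac{1+\omega}{2\epsilon}\norm{A-V}+\mu_{F^*}$. The dual condition holds because of the estimate $\tfrac12\leq\tfrac{1+t}{1+2t}\leq 1$ together with the defining relation~(\ref{eq:muFs-var2}); the primal condition unwinds (via~(\ref{eq:muG-var})) to the inequality~(\ref{eq:tau-upper-est}) on $\tau$, which in turn is equivalent to~(\ref{eq:kappa-upper-bound}), and the prescribed $b$ was defined precisely so that the first entry of its minimum forces $\kappa\leq \tfrac1b(\tfrac12-\norm{A-V}^2/(\gamma_{F^*}\gamma_G))$, i.e.~(\ref{eq:defKappaPos}), which is the form~(\ref{eq:kappa-upper-bound}) takes at $a=b=\tfrac12$-style bounds. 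The third step is to check that the stated $\tau,\sigma,\omega$ coincide with the step lengths in Theorem~\ref{thm:linear_conv_rate} for these parameter values: substituting $\mu_G = b\gamma_G$, $\mu_{F^*}=\tfrac12\gamma_{F^*}$ into $\tau := \min\{\epsilon^{-1}\delta/\norm{A-V},\sqrt{(1-\kappa)\mu_{F^*}/(\norm{V}^2\mu_G)}\}$ with $\delta=\kappa$, one must show the minimum is attained at the square-root term, giving $\tau = \sqrt{(1-\kappa)\gamma_{F^*}/(2b\norm{V}^2\gamma_G)}$; this is exactly the content of~(\ref{eq:defTauInequal}), and the third entry of the minimum defining $b$ guarantees it. Then $\sigma = (\mu_G/\mu_{F^*})\tau = 2b(\gamma_G/\gamma_{F^*})\tau$ and $\omega = (1+2\tau\mu_G)^{-1} = (1+2b\tau\gamma_G)^{-1}$ match verbatim.

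Once these verifications are in place, Theorem~\ref{thm:linear_conv_rate} applies directly and yields $\norm{u^N-\hat u}^2 = \mathcal{O}(\omega^N)$, which is the claim. I would phrase the write-up as: ``We apply Theorem~\ref{thm:linear_conv_rate} with the parameter choices $a=\tfrac12$, $b$ as above, $\kappa$ arbitrary, $\delta=\kappa$, $\mu_{F^*}=\tfrac12\gamma_{F^*}$, $\mu_G = b\gamma_G$, $\epsilon=2\norm{A-V}/\gamma_{F^*}$. The computations in~(\ref{eq:muG-var})--(\ref{eq:defKappaPos}) show that~(\ref{eq:cond-tauG-tauF*}) holds and that the step lengths above agree with those of Theorem~\ref{thm:linear_conv_rate}; the hypothesis $\gamma_{F^*}\gamma_G>2\norm{A-V}^2$ guarantees $b>0$.''

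**Main obstacle.** There is no deep obstacle — the mathematics has all been carried out in the running discussion. The only real care needed is \emph{order-of-quantifiers hygiene}: one must present $\kappa$ as chosen first and \emph{freely} in $(0,1)$, then $b$ as a function of $\kappa$ (and the problem data), and then check that \emph{with this particular} $b$ all three constraints collected along the way — namely $b\leq\tfrac12$, the $\kappa$-bound~(\ref{eq:defKappaPos}) ensuring~(\ref{eq:kappa-upper-bound}), and the bound~(\ref{eq:defTauInequal}) ensuring the $\tau$-minimum lands on the right branch — are simultaneously satisfied by construction. The slightly delicate point is that~(\ref{eq:defKappaPos}) reads as an upper bound on $\kappa$ but has been converted into a lower requirement on $1/b$ (hence an upper bound on $b$) so that it holds for \emph{every} $\kappa$; making sure this inversion is stated cleanly, and that the resulting $b$ is genuinely positive precisely when $2\norm{A-V}^2<\gamma_{F^*}\gamma_G$, is the one place where sloppiness could creep in.
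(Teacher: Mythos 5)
Your overall route is the paper's route: the theorem is stated there as the conclusion of the preceding discussion, i.e.\ one instantiates Theorem~\ref{thm:linear_conv_rate} with $a=\tfrac12$, $\mu_{F^*}=\tfrac12\gamma_{F^*}$, $\mu_G=b\gamma_G$, $\epsilon=2\norm{A-V}/\gamma_{F^*}$, $\delta=\kappa$, and checks \eqref{eq:cond-tauG-tauF*} via \eqref{eq:muG-var}--\eqref{eq:defKappaPos}. However, your key identification step has the inequality backwards. With these parameters, Theorem~\ref{thm:linear_conv_rate} prescribes $\tau=\min\bigl\{\tfrac{\kappa\gamma_{F^*}}{2\norm{A-V}^2},\ \sqrt{\tfrac{(1-\kappa)\gamma_{F^*}}{2b\norm{V}^2\gamma_G}}\bigr\}$, and since the square-root term is decreasing in $b$, the condition $b\le \tfrac{1-\kappa}{\kappa^2}\tfrac{2\norm{A-V}^4}{\norm{V}^2\gamma_{F^*}\gamma_G}$ (the third entry of the minimum defining $b$) is exactly the statement that the square-root term is \emph{at least} $\tfrac{\kappa\gamma_{F^*}}{2\norm{A-V}^2}$; that is, it guarantees the minimum is attained at the \emph{first} expression, not, as you claim, at the square-root expression. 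This is also what \eqref{eq:defTauInequal} encodes: the paper explicitly sets $\tau=\kappa(1-a)\gamma_{F^*}/\norm{A-V}^2$ in its derivation and \eqref{eq:defTauInequal} is the requirement that this left branch does not exceed the square-root branch. Consequently your assertion that the stated $\tau$ coincides verbatim with the $\tau$ of Theorem~\ref{thm:linear_conv_rate} only holds when the third entry attains the minimum (then the two branches are equal); whenever $b=\tfrac12$ or the second entry is active (the generic case for small mismatch, and precisely the regime Corollary~\ref{cor:simple-parameter-choice} is built on), the stated square-root stepsize is strictly larger than the $\tau$ of Theorem~\ref{thm:linear_conv_rate}, and then the conditions at risk are exactly $\tau\le\delta/(\epsilon\norm{A-V})$, i.e.\ \eqref{eq:cond-phi} and the positive semidefiniteness of $S_{i+1}$, as well as \eqref{eq:tau-upper-est}, which you only verified for the smaller left-branch $\tau$ through \eqref{eq:kappa-upper-bound}.

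So, as written, the proposal has a genuine gap: ``apply Theorem~\ref{thm:linear_conv_rate} directly'' is not justified for the stated stepsizes unless you additionally show that $b$ equals the third entry, or you redo the verification showing that the larger square-root $\tau$ still satisfies \eqref{eq:cond-tauG-tauF*}, \eqref{eq:cond-psi} and \eqref{eq:cond-phi} for some admissible choice of $\epsilon$ and $\delta\le\kappa$ --- neither of which is mere bookkeeping. To be fair, this tension exists in the paper itself (its running derivation fixes $\tau$ as the left branch while the theorem statement records the square-root formula), but your write-up hides rather than resolves it by flipping the direction of \eqref{eq:defTauInequal}. A corrected version of your plan should either take $\tau$ equal to the actual minimum of the two branches or supply the missing argument for the square-root choice. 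A minor further slip: it is the \emph{second} entry of the minimum defining $b$ that encodes \eqref{eq:defKappaPos}; the first entry $b\le\tfrac12$ is what allows passing from \eqref{eq:kappa-upper-bound} to \eqref{eq:defKappaPos}.
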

 As a consequence of the freedom of choice for the value of $\kappa \in (0,1)$, we can choose $\kappa \leq \frac12$ small enough such that $b$ in Theorem~\ref{thm:existence_linear_convergence} equals $\frac12$. This leads to the following corollary.
 \begin{corollary}\label{cor:simple-parameter-choice}
 Suppose that $G, F^*$ are $\gamma_G/\gamma_{F^*}$-strongly convex functions fulfilling
 \[
   \gamma_{F^*} \gamma_G > 2 \norm{A-V}^2.
 \]
   Let
   \[
    0< \kappa \leq \min \left\{\tfrac{1}{2}, 1- \tfrac{2 \norm{A-V}^2}{\gamma_{F^*} \gamma_G}, \tfrac{\norm{A-V}^2}{\norm{V}} \sqrt{\tfrac{2}{\gamma_{F^*} \gamma_G}}\right\}.
   \]
  Furthermore let $\hat{u} = (\hat x,\hat y)^{T}$ be a fixed point of the Chambolle-Pock method with mismatched adjoint and constant step lengths
\begin{align*}
\tau_i = \tau &:= \sqrt{ \frac{(1-\kappa)\gamma_{F^*}}{\norm{V}^2 \gamma_G}}, \\
\sigma_i = \sigma &:= \frac{\gamma_{G} }{\gamma_{F^*} } \tau, \\
\omega_i = \omega &:= (1+\tau \gamma_G)^{-1}.
\end{align*}
Then the iterates converge
with $\norm{u^N - \hat{u}}^2$ decaying to zero at the rate $\mathcal{O}(\omega^N)$.
\end{corollary}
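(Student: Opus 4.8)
The plan is to obtain Corollary~\ref{cor:simple-parameter-choice} as a direct specialization of Theorem~\ref{thm:existence_linear_convergence}: I would show that the stated restriction on $\kappa$ forces the auxiliary constant $b$ from that theorem to equal $\tfrac12$, and then observe that with $b=\tfrac12$ the stepsizes of Theorem~\ref{thm:existence_linear_convergence} collapse to exactly the ones claimed here.

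First I would recall that in Theorem~\ref{thm:existence_linear_convergence} one sets
\[
b=\min\left\{\tfrac12,\ \tfrac{1}{\kappa}\Big(\tfrac12-\tfrac{\norm{A-V}^2}{\gamma_{F^*}\gamma_G}\Big),\ \tfrac{1-\kappa}{\kappa^2}\tfrac{\norm{A-V}^4}{\norm{V}^2}\tfrac{2}{\gamma_{F^*}\gamma_G}\right\},
\]
so $b=\tfrac12$ as soon as the last two entries are at least $\tfrac12$. I would check the second entry: $\tfrac1\kappa\big(\tfrac12-\tfrac{\norm{A-V}^2}{\gamma_{F^*}\gamma_G}\big)\ge\tfrac12$ is equivalent to $\kappa\le 1-\tfrac{2\norm{A-V}^2}{\gamma_{F^*}\gamma_G}$, which is the second term in the minimum defining the admissible $\kappa$ in the corollary and is strictly positive by the standing hypothesis $\gamma_{F^*}\gamma_G>2\norm{A-V}^2$. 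For the third entry I would use that $\kappa\le\tfrac12$ (the first term in the corollary's minimum), hence $1-\kappa\ge\tfrac12$, so that it suffices to have $\tfrac{1}{\kappa^2}\tfrac{\norm{A-V}^4}{\norm{V}^2\gamma_{F^*}\gamma_G}\ge\tfrac12$, i.e.\ $\kappa^2\le\tfrac{2\norm{A-V}^4}{\norm{V}^2\gamma_{F^*}\gamma_G}$, i.e.\ $\kappa\le\tfrac{\norm{A-V}^2}{\norm{V}}\sqrt{\tfrac{2}{\gamma_{F^*}\gamma_G}}$, which is the third term in the corollary's minimum. Thus $b=\tfrac12$ under the stated assumptions, and the admissible set of $\kappa$ is nonempty.

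Then I would substitute $b=\tfrac12$ into the stepsize definitions of Theorem~\ref{thm:existence_linear_convergence}: $\tau=\sqrt{(1-\kappa)\gamma_{F^*}/(2b\norm{V}^2\gamma_G)}$ becomes $\sqrt{(1-\kappa)\gamma_{F^*}/(\norm{V}^2\gamma_G)}$, $\sigma=2b(\gamma_G/\gamma_{F^*})\tau$ becomes $(\gamma_G/\gamma_{F^*})\tau$, and $\omega=(1+2b\tau\gamma_G)^{-1}$ becomes $(1+\tau\gamma_G)^{-1}$; these are precisely the stepsizes asserted in the corollary. The rate $\norm{u^N-\hat u}^2=\mathcal{O}(\omega^N)$ then follows immediately from Theorem~\ref{thm:existence_linear_convergence}.

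There is no genuine obstacle here: the argument is a routine verification of three scalar inequalities. The only subtlety worth flagging is that the clean closed form $\kappa\le\tfrac{\norm{A-V}^2}{\norm{V}}\sqrt{2/(\gamma_{F^*}\gamma_G)}$ for the third constraint is obtained only after using the bound $1-\kappa\ge\tfrac12$, which is exactly why the restriction $\kappa\le\tfrac12$ must be retained in the admissible range.
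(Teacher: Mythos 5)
Your proposal is correct and is essentially the paper's own argument: the paper's proof simply states that the choice of $\kappa$ makes $b=\tfrac12$ in Theorem~\ref{thm:existence_linear_convergence} by a routine calculation, and you have carried out exactly that calculation (verifying the three scalar inequalities and substituting $b=\tfrac12$ into the stepsizes).
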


\begin{proof}
By choosing $\kappa$ as stated, one shows by a routine calculation that one gets $b=1/2$ in Theorem~\ref{thm:existence_linear_convergence}.
\end{proof}

As a consequence of Corollary~\ref{cor:simple-parameter-choice} we get a simple parameter choice method. For fast convergence one needs small $\omega$, i.e. one wants a large $\tau$. Hence, smaller $\kappa$ is better and thus, one chooses $\kappa$ positive but small (e.g. $\kappa=10^{-5}$). Note that $\kappa=0$ is not covered by our theory.
 
 Since many problems involve only one strongly convex function while the other function only remains to be convex we investigate if we can prove convergence of the Chambolle-Pock method with mismatch with the approach in this paper. To that end, we start again at inequality  (\ref{eqn:secondcond}) and investigate under which conditions we have non-positivity of $\Delta_{i+1}$. Using the definition of $\tilde H$ from~\eqref{eq:def-tildeH}, $H$ from \eqref{eq:def_H}, the definition of $Z_{i+1}$ from~\eqref{eq:def-test-op}, the one of $W_{i+1}$ from~\eqref{eq:def-steplength} and the relations for $\eta,\tau,\sigma,\xi,\phi,\psi$ from Section~\ref{sec:conv} we get from the monotonicity of the subgradient
\begin{align*} \allowdisplaybreaks
	 &\scp{\tilde{H}_{i+1}(u^{i+1})}{u^{i+1} - \hat{u}}_{Z_{i+1} W_{i+1}}  - \frac12 \norm{u^{i+1} - \hat{u}}_{Z_{i+2}M_{i+2} - Z_{i+1}M_{i+1}}^2 \\
	&\quad+ \frac12 \norm{u^{i+1}-u^i}^2_{Z_{i+1}M_{i+1}} \\
	&= \scp{H(u^{i+1}) - H(\hat{u})}{u^{i+1} - \hat{u}}_{Z_{i+1} W_{i+1}} \\
        &\quad+ \eta_{i+1} \scp{ (A-V)(x^{i+1} - \bar{x}^{i+1})}{y^{i+1} - \hat{y}} \\
        &\quad+ (\eta_{i+1} - \eta_i) \scp{V (x^{i+1} - \hat{x})}{y^{i+1} - \hat{y} } \\
        &\quad- \eta_i \mu_G \norm{x^{i+1} - \hat{x}}^2 - \eta_{i+1} \mu_{F^*} \norm{y^{i+1} - \hat{y}}^2 + \frac12 \norm{u^{i+1}-u^i}^2_{Z_{i+1}M_{i+1}} \\
        &\geq \eta_i \gamma_G \norm{x^{i+1} - \hat{x}}^2 + \eta_{i+1} \gamma_{F^*} \norm{y^{i+1} - \hat{y}}^2 \\
        &\quad+ \eta_i \scp{V^*(y^{i+1} - \hat{y})}{x^{i+1}-\hat{x}} - \eta_{i+1} \scp{y^{i+1} - \hat{y}}{A(x^{i+1} - \hat{x})} \\
        &\quad- \eta_i \scp{y^{i+1} - \hat{y}}{(A-V)(x^{i+1} - x^{i})} + (\eta_{i+1} - \eta_{i}) \scp{V^* (y^{i+1} - \hat{y})}{x^{i+1} - \hat{x}} \\
        &\quad- \eta_i \mu_G \norm{x^{i+1} - \hat{x}}^2 - \eta_{i+1} \mu_{F^*} \norm{y^{i+1} - \hat{y}}^2 \\
        &\quad+\phi_{i} \norm{x^{i+1} - x^{i}}^2 + \psi_{i+1} \norm{y^{i+1} - y^{i}}^2 - 2 \eta_{i} \scp{V^{*} (y^{i+1} - y^{i})}{x^{i+1}-x^{i}} \\
        &= \eta_{i} (\gamma_G - \mu_G) \norm{x^{i+1} - \hat{x}}^2 + \eta_{i+1} (\gamma_{F^*} - \mu_{F^*}) \norm{y^{i+1} - \hat{y}}^2 \\
        &\quad- \eta_{i+1} \scp{y^{i+1} - \hat{y}}{(A-V)(x^{i+1} - \hat{x})} - \eta_i \scp{y^{i+1} - \hat{y}}{(A-V)(x^{i+1} - x^{i})} \\
        &\quad+\phi_{i} \norm{x^{i+1} - x^{i}}^2 + \psi_{i+1} \norm{y^{i+1} - y^{i}}^2 - 2 \eta_{i} \scp{V^{*} (y^{i+1} - y^{i})}{x^{i+1}-x^{i}} \\
        &= \eta_{i} (\gamma_G - \mu_G) \norm{x^{i+1} - \hat{x}}^2 + \eta_{i+1} (\gamma_{F^*} - \mu_{F^*}) \norm{y^{i+1} - \hat{y}}^2 \\
        &\quad- \eta_{i+1} \scp{y^{i+1} - \hat{y}}{(A-V)[x^{i+1} - \hat{x} + \omega_i (x^{i+1} - x^{i})]} \\
        &\quad+\phi_{i} \norm{x^{i+1} - x^{i}}^2 + \psi_{i+1} \norm{y^{i+1} - y^{i}}^2 - 2 \eta_{i} \scp{V^{*} (y^{i+1} - y^{i})}{x^{i+1}-x^{i}}
 \end{align*}
Using the abbreviations
\begin{align*}
\begin{array}{llllll}
a = \norm{x^{i+1} - \hat{x}}, &  b = \norm{y^{i+1} - \hat{y}}, & c = \norm{x^{i+1} - x^{i}} & \text{and} & d = \norm{y^{i+1} - y^{i}}
\end{array}
\end{align*}
as well as the Cauchy-Schwarz inequality, we can finally bound 
\begin{align*}
  &\scp{\tilde{H}_{i+1}(u^{i+1})}{u^{i+1} - \hat{u}}_{Z_{i+1} W_{i+1}}  - \frac12 \norm{u^{i+1} - \hat{u}}_{Z_{i+2}M_{i+2} - Z_{i+1}M_{i+1}}^2 \\
	&\quad+ \frac12 \norm{u^{i+1}-u^i}^2_{Z_{i+1}M_{i+1}} \\
        &\geq  \eta_{i} (\gamma_G - \mu_G) a^2 + \eta_{i+1} (\gamma_{F^*} - \mu_{F^*}) b^2 - \eta_{i+1} \norm{A-V} ab - \eta_i \norm{A-V} bc \\
        &\quad+\phi_{i} c^2 + \psi_{i+1} d^2 - 2 \eta_{i} \norm{V} cd.
\end{align*}
This is a quadratic polynomial in the four variables $a,b,c,d$ and hence, non-negativity of this expression is implied by positiv semidefiniteness of the quadratic form $(a,b,c,d) Q (a,b,c,d)^T$ with
 \[Q = \left( \begin{array}{cccc}
 \eta_{i} (\gamma_G - \mu_G) & - \frac12 \eta_{i+1} \norm{A-V} & 0 & 0 \\
 - \frac12 \eta_{i+1} \norm{A-V} & \eta_{i+1} (\gamma_{F^*} - \mu_{F^*}) & - \frac12 \eta_i \norm{A-V} & 0 \\
 0 &  - \frac12 \eta_i \norm{A-V} & \phi_i & - \eta_i \norm{V} \\
 0 & 0 & - \eta_{i} \norm{V} & \psi_{i+1}
\end{array} \right) \]          
However, the conditions for positive semidefiniteness of $Q$ involve the inequality
\[
  \omega_{i}  (\gamma_G - \mu_G)  (\gamma_{F^*} - \mu_{F^*}) - \frac14 \norm{A-V}^2 \geq 0
\]
and if we have $\norm{A-V}\neq0$, i.e.~there is mismatch, then this implies that $\gamma_{G},\gamma_{F^{*}}>0$ is necessary for $\mu_{G},\mu_{F^{*}}$ to exist. Hence, we can not prove convergence of the Chambolle-Pock method with mismatch with the techniques of this paper if $G$ or $F^{*}$ is not strongly convex.

Here is a counterexample, that the Chambolle-Pock method with mismatch may actually diverge if there is mismatch and one of the functions is strongly convex while the other is not.
\begin{example}
Let $A =
\begin{pmatrix}
  1 & 1
\end{pmatrix}\in\RR^{1\times 2}$, $F(y) = (y-z)^{2}/2$ for some $z\in\RR$ and $G\equiv 0$ in problem (\ref{eq:minFA+G}), i.e., we consider the minimization problem
\begin{align*}
\min_{x\in\RR^{2}}\tfrac12(Ax-z)^{2}.
\end{align*}
We consider the accelerated Chambolle-Pock method (Algorithm 2 in~\cite{chambolle-pock-first-order}) which is (with mismatch) 
\begin{align*}
  x^{i+1} & = x^{i} + \tau_{i}V^{*}y^{i}\\
  \theta_{i} & = \tfrac1{\sqrt{1+2\tau_{i}}},\ \tau_{i+1} = \theta_{i}\tau_{i},\ \sigma_{i+1} = \sigma_{i}/\theta_{i}\\
  y^{i+1} & = \frac{y^{i} + \sigma_{i+1}A(x^{i+1} + \theta_{i}(x^{i+1} - x^{i})) - \sigma_{i+1}z}{1+\sigma_{i}}.
\end{align*}
We initialize the stepsizes with $\tau_{0}\sigma_{0}<1/\norm{A}^{2}$ and the iterates with 
\begin{align*}
x^{0} =
  \begin{pmatrix}
    0\\0
  \end{pmatrix},\quad y^{0} = -z.
\end{align*}
For the mismatch we take $V =
\begin{pmatrix}
  1 & -1
\end{pmatrix}
$.
A standard calculation shows that one gets 
\begin{align*}
  x^{n} & = \sum_{i=0}^{n}\tau_{i}\, V^{*}z,\\
  \text{and}\ y^{n} & = y^{0} = -z.
\end{align*}
The sequence $\tau_{i}$ fulfills
\begin{align*}
\tau_{i+1} & = \tfrac{\tau_{i}}{\sqrt{1+2\tau_{i}}}
\end{align*}
from which we deduce
\begin{align*}
\tfrac1{\tau_{i+1}} = \tfrac{\sqrt{1+2\tau_{i}}}{\tau_{i}}\leq \tfrac{\sqrt{1+2\tau_{i} + \tau_{i}^{2}}}{\tau_{i}} = 1 + \tfrac1{\tau_{i}}.
\end{align*}
Hence, it holds that $1/\tau_{i+1}\leq i+1$, i.e. $\tau_{i}\geq 1/i$ and thus, the iterates $x^{i}$ do diverge if $z\neq 0$.

If we use the non-accelerated variant (Algorithm 1 from~\cite{chambolle-pock-first-order})) we can show divergence of the $x^{i}$ as well.
\end{example}

\section{Numerical examples}
\label{sect:examples}

In this section we report some numerical experiments to illustrate the results.

\subsection{Convex quadratic problems}
\label{sec:conv-quad}

As examples where all quantities and solutions can be computed exactly, we consider convex quadratic problems of the form
\begin{align}\label{eq:quadratic-test-problem}
  \min_{x\in\RR^{n}}\tfrac{\alpha}{2}\norm{x}_{2}^{2} + \tfrac{1}{2\beta}\norm{Ax-z}_{2}^{2}
\end{align}
with $\alpha,\beta>0$, $A\in\RR^{m\times n}$ and $z\in\RR^{m}$. With $G(x) = \tfrac\alpha2\norm{x}_{2}^{2}$ and $F(\zeta) = \tfrac1{2\beta}\norm{\zeta-z}_{2}^{2}$ this is of the form~(\ref{eq:minFA+G}). The conjugate functions are $G^{*}(\xi) = \tfrac1{2\alpha}\norm{x}_{2}^{2}$ and $F^{*}(y) = \tfrac\beta2\norm{y}_{2}^{2} + \scp{y}{z}$ and the respective proximal operators are readily computed as
\begin{align*}
  \prox_{\tau G}(x) &= \tfrac{x}{1+\tau\alpha}\\
  \prox_{\sigma F^{*}}(y) & = \tfrac{y-\sigma z}{1+\sigma\beta}
\end{align*}
and the optimal primal solution is
\begin{align*}
  x^{*} = \Big(\alpha I + \tfrac1\beta A^{T}A\Big)^{-1}(\tfrac1\beta A^{T}z).
\end{align*}
Note that $G$ is strongly convex with constant $\gamma_{G} = \alpha$ and $F^{*}$ is strongly convex with constant $\gamma_{F^{*}} = \beta$ and hence, for $\alpha,\beta>0$ we can use Theorem~\ref{thm:existence_linear_convergence} to obtain valid stepsizes.
For a numerical experiment we choose $n=400$, $m=200$, a random matrix $A\in\RR^{m\times n}$ and a perturbation $V\in\RR^{m\times n}$ by adding a small random matrix to $A$, i.e.
\begin{align*}
  V = A + E\ \text{ with }\ \norm{E}\leq \eta.
\end{align*}
The resulting algorithm is
\begin{align}\label{eq:cp-mm-quadratic}
  \begin{split}
    x^{i+1} & = \tfrac{1}{1+\tau\alpha}(x^{i}-\tau V^{T}y^{i})\\
    y^{i+1} & = \tfrac{1}{1+\sigma\beta}\Big(y^{i} + \sigma A(x^{i+1} + \omega(x^{i+1}-x^{i})) - \sigma z\Big).
  \end{split}
\end{align}
We check the condition $\gamma_{G}\gamma_{F^{*}}>2\norm{A-V}^{2}$ numerically and use Theorem~\ref{thm:existence_linear_convergence} to obtain feasible stepsizes.
For constant stepsizes, we get as limit the unique fixed points
\begin{align}
  \label{eq:cp-mm-quadratic-fp-mm}
  \hat x & = \Big(\alpha I + \tfrac{1}{\beta}V^{T}A\Big)^{-1}(\tfrac1\beta V^{T}z) = V^T(\alpha\beta I + AV^{T})^{-1}z\\
  \hat y & = -(\beta I + \tfrac{1}{\alpha}AV^{T})^{-1}z = -\alpha(\alpha\beta I + AV^{T})^{-1}z\nonumber
\end{align}
while the true primal solution is
\begin{align}
  \label{eq:cp-mm-quadratic-fp}
  x^{*} = \Big(\alpha I + \tfrac{1}{\beta}A^{T}A\Big)^{-1}(\tfrac1\beta A^{T}z)  = A^T(\alpha\beta I + AA^{T})^{-1}z
\end{align}
For our experiment we used $\alpha = \gamma_G=0.15$ and $\beta = \gamma_{F^{*}}=1$ and $\kappa=0.01$ in Theorem~\ref{thm:existence_linear_convergence}.

Figure~\ref{fig:quadratic_linear_conv} illustrates that the method with mismatched adjoint behaves as expected: We observe linear convergence towards the fixed point $\hat x$ and the iterates reach the error to the true minimizer $x^{*}$ that has been predicted by Theorem~\ref{thm:error-estimate}.
\begin{figure}[H]
\centering
\includegraphics[width=5cm]{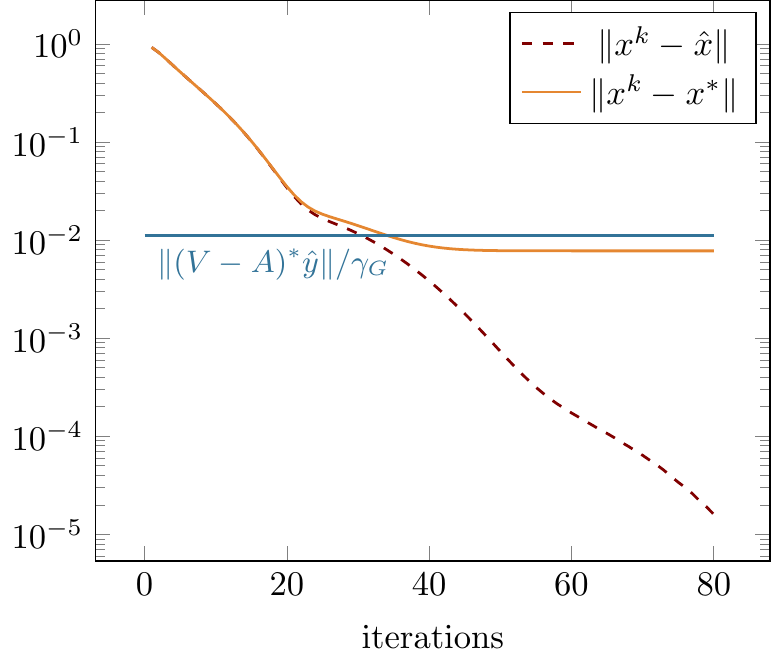} 
\caption{Convergence of iteration~\eqref{eq:cp-mm-quadratic}. Here $\hat x$ is the fixed point~\eqref{eq:cp-mm-quadratic-fp-mm} of the iteration with mismatch and $x^{*}$ is the original primal solution~\eqref{eq:cp-mm-quadratic-fp}. The solid orange plot is the distance of the primal iterates $x^{k}$ of~\eqref{eq:cp-mm-quadratic} to the fixed point of the iteration, and the dashed purple line is the distance of the iterates $x^{k}$ to the original primal solution. As predicted, the latter distance falls below the value given in Theorem~\ref{thm:error-estimate}.}\label{fig:quadratic_linear_conv}
\end{figure}

\subsection{Computerized tomography}
\label{sec:computerized-tomography}
To illustrate a real-world application of our results, we consider the problem of computerized tomography (CT)~\cite{buzug2008computed}. In computerized tomography one aims to reconstruct a slice of an object from x-ray measurements taken in different directions. The x-ray measurements are stored as the so-called sinogram and the map of the image of the slice to the sinogram is modeled by a linear map which is referred to as Radon transform or forward projection. The adjoint of the map is called backprojection. There exist various inversion formulas which express the inverse of the Radon transform explicitly, but since the Radon transform is compact (when modeled as a map between the right function spaces~\cite{natterer2001mathematics}), any inversion formula has to be unstable. One popular stable, approximate inversion method is the so called filtered backprojection (FBP)~\cite{buzug2008computed}. The method gives good approximate reconstruction when the number of projections is high and when the data is not too noisy. However, the quality of the reconstruction quickly gets worse when the number of projections decreases. There are numerous efforts to increase reconstruction quality from only a few projections, as this lowers the x-ray dose for a CT scan. One successful approach uses total variation (TV) regularization as a reconstruction method~\cite{sidky2012convex} and solves the respective minimization problem with the Chambolle-Pock method. Usually, the method takes a large number of iterations. Moreover, there are many ways to implement the forward and the backward projection. In applications it sometimes happens that a pair of forward and backward projections are chosen that are not adjoint to each other, either because this importance of adjointness is not noted, or on purpose (speed of computation, special choice of backprojector to achieve a certain reconstruction quality, see also~\cite{zeng2000unmatched,Palenstijn2011PerformanceIF,Chouzenout2021pgm-adjoint,Lorenz2018TheRK}). 

We describe a discrete image with $m\times n$ pixels as $x\in\RR^{m\times n}$. Its discrete gradient $\nabla x = u\in\RR^{m\times n\times 2}$ is a tensor and for such a tensor we define the pixel-wise absolute value in the pixel $(i_{1},i_{2})$ as $\abs{u}_{i_{1},i_{2}}^2 = \sum_{k=1}^2 u_{i_{1},i_{2},k}^2$, cf.~\cite[p. 416]{Bredies2019MathematicalIP}. For images $x\in\RR^{m\times n}$ we denote by $\norm{x}_{p} = \left( \sum_{i_{i},i_{2}}\abs{x}_{i_{1},i_{2}}^{p} \right)^{1/p}$ the usual pixel-wise $p$-norm.
With $R$ we denote the discretized Radon transform taking an $m\times n$-pixel image to a sinogram of size $s\times t$. 
We aim to solve the problem
\[
\min _{x \in  \mathbb{R}^{m \times n}} \frac{\lambda_0}{2}\|R x-z\|_{2}^{2}+ \frac{\lambda_1}{2} \norm{\abs{\nabla x}}_{1}+ \frac{\lambda_2}{2} \norm{x}_2^2
\]
for a given sinogram $z$ and constants $\lambda_{0},\lambda_1,\lambda_{2}>0$.
This can be expressed as the saddle point problem
\[
\min_{x \in \mathbb{R}^{m \times n}} \max _{\substack{p \in \mathbb{R}^{m \times n \times 2}\\ q \in \mathbb{R}^{s \times t}}}- \langle x, \operatorname{div} p\rangle + \langle R x-z, q\rangle - \frac{1}{2 \lambda_0} \norm{q}^2 - I_{\norm{\cdot}_{\infty} \leq \lambda_1}(p) + \frac{\lambda_2}{2} \norm{x}^2.
\]
With $F^*(q,p) = 
\frac{1}{2 \lambda_0}\|q\|_{2}^{2} + \langle q, z \rangle + I_{\norm{\cdot}_{\infty} \leq \lambda_1}(p)$, $G(x) = \frac{\lambda_2}{2} \norm{x}_2^2$ and $A =
\left(\begin{array}{l}
R \\
\nabla
\end{array}\right)
$ the saddle point formulation is exactly of the form~(\ref{eq:minFA+G}).
The function $G$ is strongly convex, however, $F^{*}$ is not. Hence, we regularize further by adding $\epsilon\|p\|_{2}^{2}/2$ with $\epsilon>0$ to $F^{*}$ which amounts to a Huber-smoothing of the total variation term in the primal problem.

In our experiment we want to recover the Shepp Logan phantom $\hat{x}$ with $400 \times 400$ pixels from measurement with just 40 equispaced angles and 400 bins for each angle and a parallel bean geometry, and added 15\% relative Gaussian noise. Hence, the resulting sinogram $z$ is of the shape $40 \times 400$. To implement the mismatch we used non-adjoined implementations of the forward and back-projection (we used the Astra toolbox~\cite{van2016fast} and took as forward operator $A$ the parallel strip beam projector and as backward projection $V^{*}$ the adjoint of the parallel line beam projector, see \url{https://www.astra-toolbox.com/docs/proj2d.html} for the documentation). All experiments are done in Python 3.7.

We use the algorithm from Theorem \ref{thm:linear_conv_rate} where we used $V^{*} =
\begin{pmatrix}
  S^{*} & -\operatorname{div}
\end{pmatrix}
$ instead of $A^{*}$ and replace the correct adjoint $S^{*}$ by the computationally more efficient  adjoint of the parallel line projector.
To achieve a fair comparison, we vary the regularization parameter $\lambda_1$ of the total variation penalty from $0.6$ to $2.4$. The remaining parameters are set to $\lambda_0 = 1$ and $\lambda_2 = \epsilon = 0.01$.  The initial stepsizes are set according to Theorem \ref{thm:linear_conv_rate} for the mismatched adjoint and  \cite[Algorithm 3]{chambolle-pock-first-order} in the non-mismatched case, respectively.
 The operator norm of the adjoint operator $S^{*}$ is computed numerically and the operator norm of the gradient is estimated like in (cf.~\cite[Lemma 6.142]{Bredies2019MathematicalIP}).

In Figure~\ref{fig:TV-rec} we show the original image (the famous Shepp-Logan phantom), the reconstruction by filtered backprojection and the best results of the Chambolle-Pock iteration for TV regularized reconstruction with the exact and with the mismatched adjoint. One notes that the use of a mismatched adjoint leads to a good reconstruction, comparable to the result with \cite[Algorithm 3]{chambolle-pock-first-order}.
 
\begin{figure}[htb]
\centering\scriptsize
\begin{tabular}{rcccc}
&  Original & Sinogram & \shortstack{Rec. \\ with FBP} & \\
& \includegraphics[width=1.75cm]{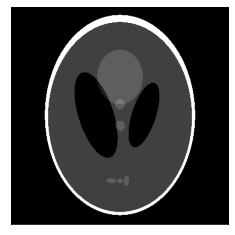} &
 \includegraphics[height=1.75cm, width=1.75cm]{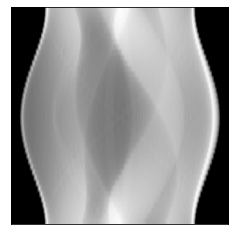} & \includegraphics[width=1.75cm]{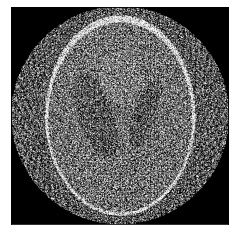} & \\
 & \shortstack{Rec. \\ with V}   &  \shortstack{Rec. Error \\ with V} & \shortstack{Rec.  \\ with A} & \shortstack{Rec. Error  \\ with A} \\
\rotatebox{90}{\quad $\lambda = 0.6$}& 
\includegraphics[height=1.75cm]{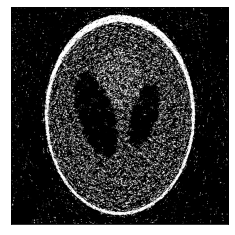}& 
\includegraphics[height=1.75cm]{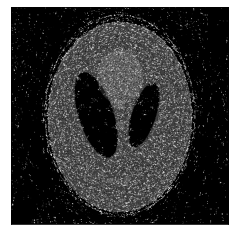} & 
\includegraphics[height=1.75cm]{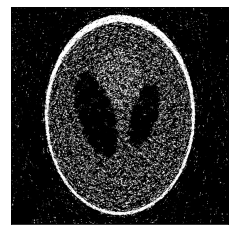} &
\includegraphics[height=1.75cm]{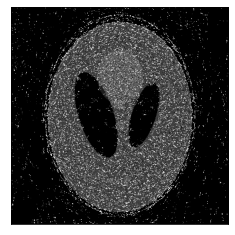} \\
\rotatebox{90}{\quad $\lambda = 1.2$}& 
\includegraphics[height=1.75cm]{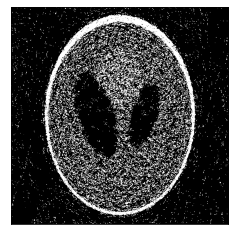}& 
\includegraphics[height=1.75cm]{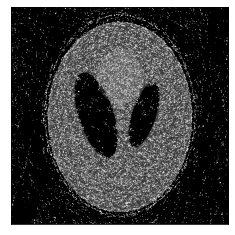} & 
\includegraphics[height=1.75cm]{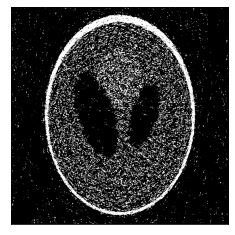} &
\includegraphics[height=1.75cm]{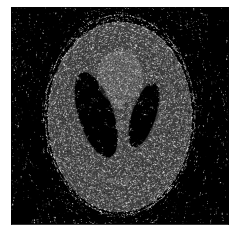}  \\
\rotatebox{90}{\quad $\lambda = 2.4$}& 
\includegraphics[height=1.75cm]{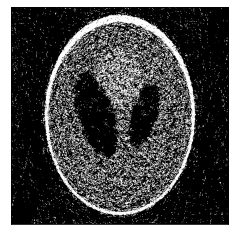}& 
\includegraphics[height=1.75cm]{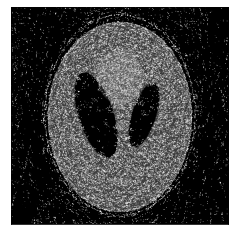} & 
\includegraphics[height=1.75cm]{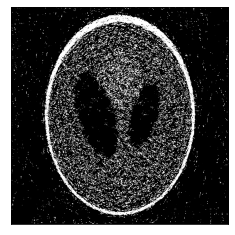} &
\includegraphics[height=1.75cm]{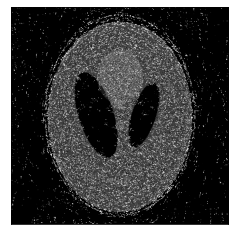} \\
\end{tabular}
\caption{Reconstruction (\textit{Rec.}) of the Shepp Logan phantom. From left to right: Reconstruction with adjoint mismatch with fixed grayscale, the absolute reconstruction error towards the original image, the reconstruction with the exact adjoint and the corresponding absolute error. All images have a fixed grayscale with values reaching from 0.0 to 1.0.}
\label{fig:TV-rec}
\end{figure}

Figure~\ref{fig:TV-errors} shows the distance of the iteration to the exact reconstruction (i.e. the true, noise-free Shepp-Logan phantom). Naturally, the Chambolle-Pock iteration does not drive the error to the noise-free solution to zero (for both exact and mismatched adjoint). There are at least three different reasons: There is some error in the sinogram, we only use few projections and there is TV regularization involved. However, the non-mismatched Chambolle-Pock method gets admittedly closer to the original image than the mismatched method. Figure~\ref{fig:TV-objective} shows the primal objective. We note that in this example the iteration with mismatch yields results comparable to the non-mismatched Chambolle-Pock method. Moreover, it can be seen that, as expected, the use of a mismatched adjoint prevents the true minimization of the objective.
With the computationally more efficient parallel line projector as adjoint, we are able to decreases the computation time significantly with approximately $15\%$ average time saving per iteration on a 2020 M1 MacBook Air running macOS Big Sur. However, the non-mismatched method takes less iterations to retrieve a good result, so the no computational advantage can be shown by this experiment.

\begin{figure}[htb]
\centering
\includegraphics{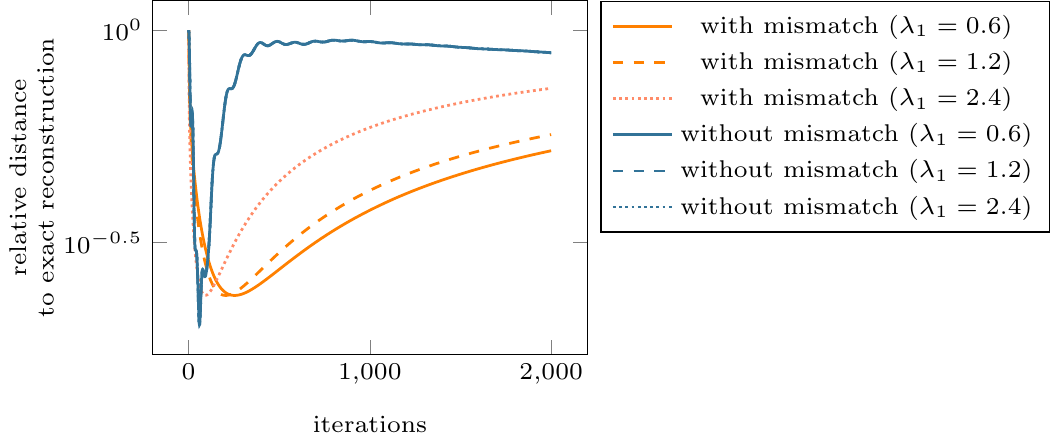} 
\caption{Relative distance to the exact reconstruction over iterations.}
\label{fig:TV-errors}
\end{figure}

\begin{figure}[htb]
\centering
\includegraphics{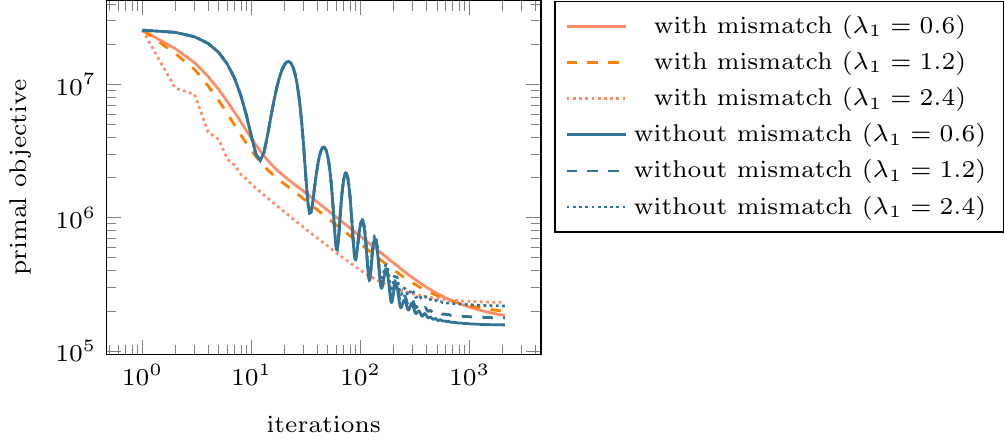} 
\caption{Decay of the primal objective function value.}
\label{fig:TV-objective}
\end{figure}

\section{Conclusion}
\label{sect:conclusion}

We have established stepsizes, for which the Chambolle-Pock method converges, even if the adjoint is mismatched. Additionally, we presented results that showed, that not only strong convergence can be preserved under strong convexity assumptions, and also  that the convergence rate is in a similar region. Furthermore, as a broad class of problems are in the scope of this paper, we established an upper bound on the distance between the original solution and the fixed point of iteration with mismatch. Thus, approximating the adjoint with a computationally more efficient algorithm can be done as long as the assumptions are respected. One of these assumptions is, that the iteration with mismatch still possesses a fixed point and more work is needed to understand when this is guaranteed. Furthermore, we discussed advantages and disadvantages of our analysis and illustrated our results on an example from computerized tomography, in which the mismatched adjoint was obtained using different discretization schemes for the forward operator and the mismatched adjoint.

\section*{Acknowledgement}

This work has received funding from the European Union's Framework Programme for Research and Innovation Horizon 2020 (2014-2020) under the Marie Sk\l odowska-Curie Grant Agreement No. 861137. \raisebox{-0.25ex}{\includegraphics[height=2ex]{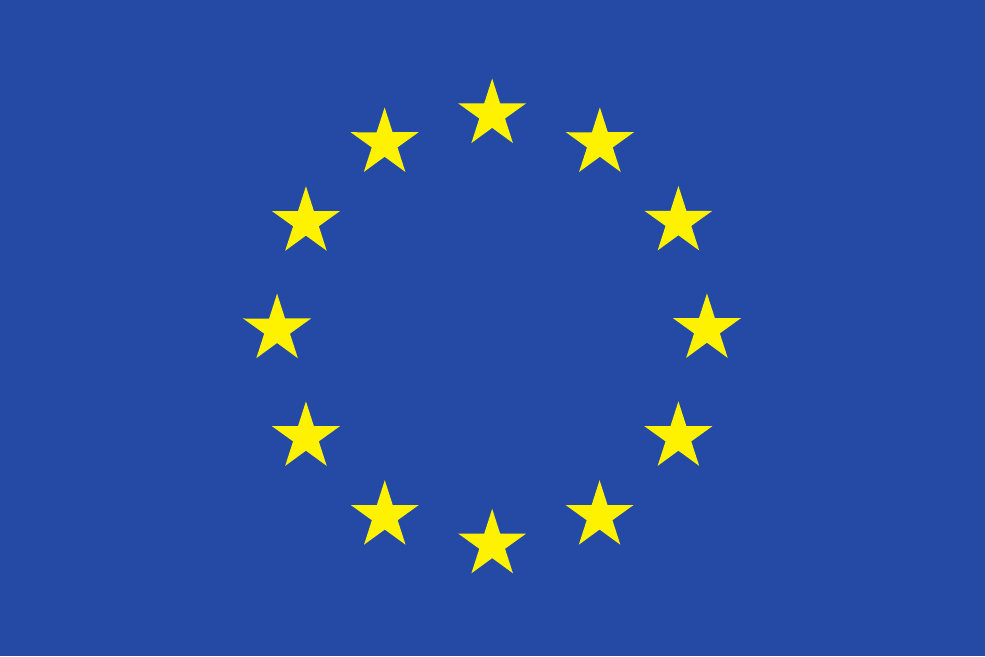}}

\subsection*{Compliance with Ethical Standards}
\label{sec:compliance}
The authors declare that there is no conflict of interest.

\bibliography{references.bib}


\begin{thebibliography}{18}
\ifx \bisbn   \undefined \def \bisbn  #1{ISBN #1}\fi
\ifx \binits  \undefined \def \binits#1{#1}\fi
\ifx \bauthor  \undefined \def \bauthor#1{#1}\fi
\ifx \batitle  \undefined \def \batitle#1{#1}\fi
\ifx \bjtitle  \undefined \def \bjtitle#1{#1}\fi
\ifx \bvolume  \undefined \def \bvolume#1{\textbf{#1}}\fi
\ifx \byear  \undefined \def \byear#1{#1}\fi
\ifx \bissue  \undefined \def \bissue#1{#1}\fi
\ifx \bfpage  \undefined \def \bfpage#1{#1}\fi
\ifx \blpage  \undefined \def \blpage #1{#1}\fi
\ifx \burl  \undefined \def \burl#1{\textsf{#1}}\fi
\ifx \doiurl  \undefined \def \doiurl#1{\url{https://doi.org/#1}}\fi
\ifx \betal  \undefined \def \betal{\textit{et al.}}\fi
\ifx \binstitute  \undefined \def \binstitute#1{#1}\fi
\ifx \binstitutionaled  \undefined \def \binstitutionaled#1{#1}\fi
\ifx \bctitle  \undefined \def \bctitle#1{#1}\fi
\ifx \beditor  \undefined \def \beditor#1{#1}\fi
\ifx \bpublisher  \undefined \def \bpublisher#1{#1}\fi
\ifx \bbtitle  \undefined \def \bbtitle#1{#1}\fi
\ifx \bedition  \undefined \def \bedition#1{#1}\fi
\ifx \bseriesno  \undefined \def \bseriesno#1{#1}\fi
\ifx \blocation  \undefined \def \blocation#1{#1}\fi
\ifx \bsertitle  \undefined \def \bsertitle#1{#1}\fi
\ifx \bsnm \undefined \def \bsnm#1{#1}\fi
\ifx \bsuffix \undefined \def \bsuffix#1{#1}\fi
\ifx \bparticle \undefined \def \bparticle#1{#1}\fi
\ifx \barticle \undefined \def \barticle#1{#1}\fi
\bibcommenthead
\ifx \bconfdate \undefined \def \bconfdate #1{#1}\fi
\ifx \botherref \undefined \def \botherref #1{#1}\fi
\ifx \url \undefined \def \url#1{\textsf{#1}}\fi
\ifx \bchapter \undefined \def \bchapter#1{#1}\fi
\ifx \bbook \undefined \def \bbook#1{#1}\fi
\ifx \bcomment \undefined \def \bcomment#1{#1}\fi
\ifx \oauthor \undefined \def \oauthor#1{#1}\fi
\ifx \citeauthoryear \undefined \def \citeauthoryear#1{#1}\fi
\ifx \endbibitem  \undefined \def \endbibitem {}\fi
\ifx \bconflocation  \undefined \def \bconflocation#1{#1}\fi
\ifx \arxivurl  \undefined \def \arxivurl#1{\textsf{#1}}\fi
\csname PreBibitemsHook\endcsname

\bibitem{chambolle-pock-first-order}
\begin{barticle}
\bauthor{\bsnm{Chambolle}, \binits{A.}},
\bauthor{\bsnm{Pock}, \binits{T.}}:
\batitle{A first-order primal-dual algorithm for convex problems with
  applications to imaging}.
\bjtitle{Journal of Mathematical Imaging and Vision}
\bvolume{40},
\bfpage{120}--\blpage{145}
(\byear{2011}).
\doiurl{10.1007/s10851-010-0251-1}
\end{barticle}
\endbibitem

\bibitem{Buffiere2010InSE}
\begin{barticle}
\bauthor{\bsnm{Buffiere}, \binits{J.}},
\bauthor{\bsnm{Maire}, \binits{E.}},
\bauthor{\bsnm{Adrien}, \binits{J.}},
\bauthor{\bsnm{Masse}, \binits{J.}},
\bauthor{\bsnm{Boller}, \binits{E.}}:
\batitle{In situ experiments with {X} ray tomography: an attractive tool for
  experimental mechanics}.
\bjtitle{Experimental Mechanics}
\bvolume{50},
\bfpage{289}--\blpage{305}
(\byear{2010})
\end{barticle}
\endbibitem

\bibitem{Riddell1995TheAI}
\begin{barticle}
\bauthor{\bsnm{Riddell}, \binits{C.}},
\bauthor{\bsnm{Bendriem}, \binits{B.}},
\bauthor{\bsnm{Bourguignon}, \binits{M.}},
\bauthor{\bsnm{Kern{\'e}vez}, \binits{J.}}:
\batitle{The approximate inverse and conjugate gradient: non-symmetrical
  algorithms for fast attenuation correction in {SPECT}}.
\bjtitle{Physics in medicine and biology}
\bvolume{40 2},
\bfpage{269}--\blpage{81}
(\byear{1995})
\end{barticle}
\endbibitem

\bibitem{zeng2000unmatched}
\begin{barticle}
\bauthor{\bsnm{Zeng}, \binits{G.L.}},
\bauthor{\bsnm{Gullberg}, \binits{G.T.}}:
\batitle{Unmatched projector/backprojector pairs in an iterative reconstruction
  algorithm}.
\bjtitle{IEEE transactions on medical imaging}
\bvolume{19}(\bissue{5}),
\bfpage{548}--\blpage{555}
(\byear{2000}).
\doiurl{10.1109/42.870265}
\end{barticle}
\endbibitem

\bibitem{Savanier2021ProximalGA}
\begin{botherref}
\oauthor{\bsnm{Savanier}, \binits{M.}},
\oauthor{\bsnm{Chouzenoux}, \binits{{\'E}.}},
\oauthor{\bsnm{Pesquet}, \binits{J.}},
\oauthor{\bsnm{Riddell}, \binits{C.}},
\oauthor{\bsnm{Trousset}, \binits{Y.}}:
Proximal gradient algorithm in the presence of adjoint mismatch.
2020 28th European Signal Processing Conference (EUSIPCO),
2140--2144
(2021)
\end{botherref}
\endbibitem

\bibitem{Chouzenout2021pgm-adjoint}
\begin{barticle}
\bauthor{\bsnm{Chouzenoux}, \binits{E.}},
\bauthor{\bsnm{Pesquet}, \binits{J.-C.}},
\bauthor{\bsnm{Riddell}, \binits{C.}},
\bauthor{\bsnm{Savanier}, \binits{M.}},
\bauthor{\bsnm{Trousset}, \binits{Y.}}:
\batitle{Convergence of proximal gradient algorithm in the presence of adjoint
  mismatch}.
\bjtitle{Inverse Problems}
\bvolume{37}(\bissue{6}),
\bfpage{065009}--\blpage{29}
(\byear{2021}).
\doiurl{10.1088/1361-6420/abd85c}
\end{barticle}
\endbibitem

\bibitem{Lorenz2018TheRK}
\begin{barticle}
\bauthor{\bsnm{Lorenz}, \binits{D.A.}},
\bauthor{\bsnm{Rose}, \binits{S.D.}},
\bauthor{\bsnm{Sch{\"o}pfer}, \binits{F.}}:
\batitle{The randomized {K}aczmarz method with mismatched adjoint}.
\bjtitle{BIT Numerical Mathematics}
\bvolume{58},
\bfpage{1079}--\blpage{1098}
(\byear{2018})
\end{barticle}
\endbibitem

\bibitem{Dong2019FixingNO}
\begin{barticle}
\bauthor{\bsnm{Dong}, \binits{Y.}},
\bauthor{\bsnm{Hansen}, \binits{P.}},
\bauthor{\bsnm{Hochstenbach}, \binits{M.}},
\bauthor{\bsnm{Riis}, \binits{N.A.B.}}:
\batitle{Fixing nonconvergence of algebraic iterative reconstruction with an
  unmatched backprojector}.
\bjtitle{SIAM J. Sci. Comput.}
\bvolume{41},
\bfpage{1822}--\blpage{1839}
(\byear{2019})
\end{barticle}
\endbibitem

\bibitem{Elfving2018UnmatchedPP}
\begin{botherref}
\oauthor{\bsnm{Elfving}, \binits{T.}},
\oauthor{\bsnm{Hansen}, \binits{P.}}:
Unmatched projector/backprojector pairs: {P}erturbation and convergence
  analysis.
SIAM J. Sci. Comput.
\textbf{40}
(2018)
\end{botherref}
\endbibitem

\bibitem{valkonen_test}
\begin{barticle}
\bauthor{\bsnm{Valkonen}, \binits{T.}}:
\batitle{Testing and non-linear preconditioning of the proximal point method}.
\bjtitle{Appl. Math. Optim.}
\bvolume{82}(\bissue{2}),
\bfpage{591}--\blpage{636}
(\byear{2020}).
\doiurl{10.1007/s00245-018-9541-6}
\end{barticle}
\endbibitem

\bibitem{he-yuan-pdhg}
\begin{barticle}
\bauthor{\bsnm{He}, \binits{B.-S.}},
\bauthor{\bsnm{You}, \binits{Y.}},
\bauthor{\bsnm{Yuan}, \binits{X.}}:
\batitle{On the convergence of primal-dual hybrid gradient algorithm}.
\bjtitle{SIAM Journal on Imaging Sciences}
\bvolume{7},
\bfpage{2526}--\blpage{2537}
(\byear{2014}).
\doiurl{10.1137/140963467}
\end{barticle}
\endbibitem

\bibitem{valkonen_acc}
\begin{barticle}
\bauthor{\bsnm{Clason}, \binits{C.}},
\bauthor{\bsnm{Mazurenko}, \binits{S.}},
\bauthor{\bsnm{Valkonen}, \binits{T.}}:
\batitle{Acceleration and global convergence of a first-order primal-dual
  method for nonconvex problems}.
\bjtitle{SIAM J. Optim.}
\bvolume{29},
\bfpage{933}--\blpage{963}
(\byear{2019})
\end{barticle}
\endbibitem

\bibitem{buzug2008computed}
\begin{bbook}
\bauthor{\bsnm{Buzug}, \binits{T.M.}}:
\bbtitle{Computed Tomography}.
\bpublisher{Springer},
\blocation{Berlin, Heidelberg}
(\byear{2008})
\end{bbook}
\endbibitem

\bibitem{natterer2001mathematics}
\begin{bbook}
\bauthor{\bsnm{Natterer}, \binits{F.}}:
\bbtitle{The Mathematics of Computerized Tomography}.
\bsertitle{Classics in Applied Mathematics},
vol. \bseriesno{32}.
\bpublisher{Society for Industrial and Applied Mathematics (SIAM)},
\blocation{Philadelphia,PA}
(\byear{2001}).
\doiurl{10.1137/1.9780898719284}.
\bcomment{Reprint of the 1986 original}
\end{bbook}
\endbibitem

\bibitem{sidky2012convex}
\begin{barticle}
\bauthor{\bsnm{Sidky}, \binits{E.Y.}},
\bauthor{\bsnm{Jorgensen}, \binits{J.H.}},
\bauthor{\bsnm{Pan}, \binits{X.}}:
\batitle{Convex optimization problem prototyping for image reconstruction in
  computed tomography with the {C}hambolle--{P}ock algorithm}.
\bjtitle{Physics in Medicine \& Biology}
\bvolume{57}(\bissue{10}),
\bfpage{3065}
(\byear{2012})
\end{barticle}
\endbibitem

\bibitem{Palenstijn2011PerformanceIF}
\begin{barticle}
\bauthor{\bsnm{Palenstijn}, \binits{W.J.}},
\bauthor{\bsnm{Batenburg}, \binits{K.}},
\bauthor{\bsnm{Sijbers}, \binits{J.}}:
\batitle{Performance improvements for iterative electron tomography
  reconstruction using graphics processing units ({GPU}s).}
\bjtitle{Journal of structural biology}
\bvolume{176 2},
\bfpage{250}--\blpage{3}
(\byear{2011})
\end{barticle}
\endbibitem

\bibitem{Bredies2019MathematicalIP}
\begin{bbook}
\bauthor{\bsnm{Bredies}, \binits{K.}},
\bauthor{\bsnm{Lorenz}, \binits{D.}}:
\bbtitle{Mathematical Image Processing}.
\bpublisher{Birkh{\"a}user},
\blocation{Cham}
(\byear{2018}).
\doiurl{10.1007/978-3-030-01458-2}
\end{bbook}
\endbibitem

\bibitem{van2016fast}
\begin{barticle}
\bauthor{\bsnm{Van~Aarle}, \binits{W.}},
\bauthor{\bsnm{Palenstijn}, \binits{W.J.}},
\bauthor{\bsnm{Cant}, \binits{J.}},
\bauthor{\bsnm{Janssens}, \binits{E.}},
\bauthor{\bsnm{Bleichrodt}, \binits{F.}},
\bauthor{\bsnm{Dabravolski}, \binits{A.}},
\bauthor{\bsnm{De~Beenhouwer}, \binits{J.}},
\bauthor{\bsnm{Batenburg}, \binits{K.J.}},
\bauthor{\bsnm{Sijbers}, \binits{J.}}:
\batitle{Fast and flexible x-ray tomography using the astra toolbox}.
\bjtitle{Optics express}
\bvolume{24}(\bissue{22}),
\bfpage{25129}--\blpage{25147}
(\byear{2016})
\end{barticle}
\endbibitem

\end{thebibliography}

\end{document}